\renewcommand{\baselinestretch}{\baselinestretch}
\renewcommand{\baselinestretch}{1.1}
\numberwithin{equation}{section}
\numberwithin{table}{section}
\newcommand{\GL}{\mathrm{GL}}
\newcommand{\SL}{\mathrm{SL}}
\newcommand{\Q}{\mathbb{Q}}
\newcommand{\Z}{\mathbb{Z}}
\newcommand{\N}{\mathbb{N}}
\newcommand{\lcm}{\mathrm{lcm}}
\newcommand{\ra}{{\, \rightarrow \,}}
\newcommand{\z}{{\mathbb Z}}
\newcommand{\q}{{\mathbb Q}}
\newcommand{\n}{{\mathbb N}}
\newcommand{\of}[1]{{\mathcal{O}_F^{#1}}}
\newcommand{\Mod}[1]{\ (\mathrm{mod}\ #1)}
\newcommand{\slzeta}[2]{\zeta_{#1}^{\SL}\left(#2\right)}
\newcommand{\glzeta}[2]{\zeta_{#1}^{\GL}\left(#2\right)}
\newcommand{\Cl}{\mathrm{Cl}}
	\author{Daejun Kim}
	\address{Department of Mathematics Education, Korea University,
		Seoul 02841, Republic of Korea}
	\email{daejunkim@korea.ac.kr}	
	\thanks{The first author was supported by the National Research Foundation of Korea(NRF) grant funded by the Korea government(MSIT) (RS-2024-00455692).}
	\author{Seok Hyeong Lee}
	\address{Center for Quantum Structures in Modules and Spaces, Seoul National University, Seoul 08826, Republic of Korea}
	\thanks{The second author was supported by the NRF grants No. 2020R1A5A1016126 and No. RS-2024-00462910.}
	\email{lshyeong@snu.ac.kr}
 \author{Seungjai Lee}
	\address{Department of Mathematics, Incheon National University, Incheon 22012, Republic of Korea}
	\email{seungjai.lee@inu.ac.kr}
\subjclass[2020]{11E12, 11E16, 11M41}
\newtheorem{thm}{Theorem}[section]
\newtheorem{lem}[thm]{Lemma}
\newtheorem{cor}[thm]{Corollary}
\newtheorem{prop}[thm]{Proposition}
\theoremstyle{definition}
\theoremstyle{remark}
\newtheorem{rmk}[thm]{Remark}
\numberwithin{equation}{section}
\title{Zeta functions of quadratic lattices of a hyperbolic plane}
\begin{document}

\begin{abstract}
    In this paper, we study the Dirichlet series that enumerates proper equivalence classes of full-rank sublattices of a given quadratic lattice in a hyperbolic plane---that is, a nondegenerate isotropic quadratic space of dimension $2$. We derive explicit formulas for the associated zeta functions and obtain a combinatorial way to compute them. Their analytic properties lead to the intriguing consequence that a large proportion of proper classes are one-lattice classes.
\end{abstract}
\maketitle
\allowdisplaybreaks
\section{Introduction}

In \cite{KLL}, the authors initiated the study of zeta functions associated with a quadratic lattice $L$, the Dirichlet series enumerating the proper equivalence classes of full-rank sublattices of $L$. To be more precise, let $L$ be a $\z$-lattice; that is, a free $\z$-module $L=\z e_1 + \cdots +\z e_n$ equipped with a quadratic form $Q:L\ra\z$ and the associated nondegenerate symmetric bilinear form $B:L\times L \ra \frac{1}{2}\z$, defined by $B(x,y)=\frac{1}{2}(Q(x+y)-Q(x)-Q(y))$ for any $x,y\in L$. The {\it discriminant} $d_L$ of $L$ is defined as the determinant of the {\it Gram matrix} $M_L:=(B(e_i,e_j))$ associated with $L$ and a basis $\{e_i\}$. We write $L\cong M_L$ to indicate that $L$ has a basis with Gram matrix $M_L$. We may naturally extend the quadratic form and the bilinear form to consider the quadratic space $V=\q L$ on which $L$ lies.

Two $\z$-lattices $L_1$ and $L_2$ on the same quadratic space $V$ share the same geometric properties if they are {\it isometric}; that is, there exists an {\it isometry} $\sigma\in O(V)$ such that $\sigma(L_1)=L_2$, where
\[
O(V)=\{\sigma\in \GL(V) : B(\sigma x,\sigma y)=B(x,y) \text{ for all }x,y\in V\}.
\]
is the {\it orthogonal group} of $V$, consisting of all isometries of $V$. We further say that $L_1$ and $L_2$ are {\it properly isometric} if $\sigma(L_1)=L_2$ for some {\it proper isometry} $\sigma\in O^+(V):=O(V)\cap \mathrm{SL}(V)$.

We are interested in the number of non-isometric full-rank sublattices of a given $\z$-lattice $L$ and how they distinguish $\z$-lattices. For this, we consider for $m\in\n$, the set $\mathcal{L}_m:=\{ K \subseteq L \mid [L:K]=m\}$ of all $\z$-sublattices of $L$ of index $m$ and define the {\it zeta function of proper isometry classes of $L$}, denoted by $\slzeta{L}{s}$, and the {\it zeta function of isometry classes of $L$}, denoted by $ \glzeta{L}{s}$, to be the Dirichlet series 
\[
    \slzeta{L}{s}:=\sum_{m=1}^\infty a_m^+(L) m^{-s} \qquad \text{and} \qquad   \glzeta{L}{s}:=\sum_{m=1}^\infty a_m(L) m^{-s},
\]
where $a_m^+(L)$ is the number of properly non-isometric lattices in $\mathcal{L}_m$, and $a_m(L)$ is the number of non-isometric lattices in $\mathcal{L}_m$. In \cite{KLL}, the authors investigated $\slzeta{L}{s}$ and $\glzeta{L}{s}$ for positive definite binary lattices $L$ such that $-4d_L$ being the discriminant of $F=\q(\sqrt{-4d_L})$. It turns out that the formulas for $\glzeta{L}{s}$ are complicated due to the presence of infinitely many improper isometries of $V$. 

On the other hand, the formulas for $\slzeta{L}{s}$ are rather simple and have nicer properties. Let $\zeta_{\z^2}(s)=\sum_{K\subseteq \z^2} \lvert \z^2 :K\rvert^{-s}$ denote the Dirichlet series enumerating all full-rank $\z$-sublattices $K\subseteq\z^2$. It is known that $\zeta_{\z^2}(s)=\zeta(s)\zeta(s-1)$, where $\zeta(s)$ is the Riemann zeta function (the book by Lubotzky and Segal \cite{LS} contains five different proofs for this), and \cite[Theorem 1.1]{KLL} states that for such $L$,
\[
\slzeta{L}{s} = \frac{2}{|\of{\times}|} \frac{\zeta_{\Z^2}(s)}{\zeta_{F}(s)} \left( \sum_{n=1}^{\infty} \frac{ \# \{ [J]\in \Cl_F: J \le \of{}, \, N(J)=n\} }{n^{s}} \right) +  \frac{|\of{\times}| - 2}{|\of{\times}|}  \left(\prod_{p\,\text{split}} (1-p^{-s}) \right) \zeta_{F}(s).
\]
Here $\of{}$, $\of{\times}$, $\Cl_F$ are the ring of integers, the group of units, the ideal class group of $F$, respectively; and $\zeta_F(s)$ is the Dedekind zeta function of $F$. As a result, such $L$ having the same discriminant share the same $\slzeta{L}{s}$ (see \cite[Corollary 1.2]{KLL}). It is natural to ask whether similar results can be obtained for indefinite binary lattices. 

\subsection{Main results and organizations}
In this paper, we investigate $\slzeta{L}{s}$ for indefinite binary $\z$-lattices $L$ on a {\it hyperbolic plane}---a $2$-dimensional quadratic space $V$ such that $Q(v)=0$ for some $v\in V$. Note that every hyperbolic plane contains a $\z$-lattice
\[
\mathbb{H}\cong\begin{pmatrix} 0&\frac{1}{2}\\ \frac{1}{2}&0 \end{pmatrix}.
\]
Moreover, since $\slzeta{L}{s}$ is invariant under scaling (see \cite[Remark 2.1]{KLL}) and isometric lattices share the same zeta function, it suffices to consider the sublattices $L\subseteq\mathbb{H}$ up to isometry. Lemma \ref{lem:Gram-Matrix-L} and Proposition \ref{prop:cond-proper-iso} demonstrate this reduction. 
As a result, we may restrict our attention to those $L$ with $\mathfrak{n}L=\z$ in proving the results, where the {\it norm ideal} $\mathfrak{n}L$ of $L$ is the $\z$-ideal generated by $Q(x)$ for all $x\in L$. It turns out that $\slzeta{L}{s}$ is invariant of $[\mathbb{H}:L](\mathfrak{n}L)^{-1}$ as follows.

\begin{thm} \label{thm:MainAnswer}
Let $L$ be a sublattice of $\mathbb{H}$ and let $B\in\n$ be such that $B\z=[\mathbb{H}:L](\mathfrak{n}L)^{-1}$. Then we have
\[
\slzeta{L}{s} =\frac{\zeta(s-1)}{\zeta(s)} \sum_{b \mid B} (B/b)^{-s}\sum_{m=1}^{\infty}  \frac{ \# \{d^2 \Mod {b} : d \mid m,\, \gcd(d,b)=1 \} }{m^s}.
\]
\end{thm}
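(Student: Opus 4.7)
The plan is to reduce the problem to a concrete standard form for $L$, parameterize the sublattices of $L$ explicitly via Hermite normal form, identify the proper isometry invariant, and then manipulate the resulting Dirichlet series into the claimed shape.

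\emph{Setup and parameterization.} By the scaling invariance of $\slzeta{L}{s}$ together with Lemma~\ref{lem:Gram-Matrix-L} and Proposition~\ref{prop:cond-proper-iso}, I may assume $\mathfrak{n}L=\z$, in which case $L$ admits a basis $\{v_1,v_2\}$ (with $v_1$ primitive isotropic) whose Gram matrix takes the form $\begin{pmatrix} 0 & B/2 \\ B/2 & c \end{pmatrix}$ for some integer $c$ with $\gcd(B,c)=1$. Every sublattice $K\subseteq L$ of index $m$ has a unique Hermite normal form basis $\{\alpha v_1,\, \beta v_1+\delta v_2\}$ with $\alpha\delta=m$ and $0\le\beta<\alpha$, and a direct computation gives the Gram matrix $\begin{pmatrix} 0 & mB/2 \\ mB/2 & c_K \end{pmatrix}$ with $c_K := \beta\delta B + \delta^2 c$.

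\emph{Classification and collision structure.} By the proper isometry criterion for binary hyperbolic lattices encoded in Proposition~\ref{prop:cond-proper-iso}, two such sublattices $K_1,K_2$ are properly isometric if and only if $c_{K_1}\equiv c_{K_2}\Mod{mB}$, so $a_m^+(L)=|S_m|$ where
\[
S_m := \left\{\beta\delta B + \delta^2 c\Mod{mB}: \alpha\delta=m,\; 0\le\beta<\alpha\right\}\subseteq \z/mB\z.
\]
For each $\delta\mid m$ the set $T_\delta := \{\delta^2c + j\delta B\Mod{mB}: 0\le j<m/\delta\}$ has exactly $m/\delta$ elements and $S_m=\bigcup_{\delta\mid m}T_\delta$, with $\sum_\delta|T_\delta|=\sigma_1(m)$ equal to the total number of sublattices counted with multiplicity. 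Using $\gcd(B,c)=1$, a direct computation shows that $T_{\delta_1}\cap T_{\delta_2}\ne\emptyset$ precisely when $\delta_1^2\equiv \delta_2^2\Mod{B\gcd(\delta_1,\delta_2)}$; this is exactly a square-root-of-unity condition modulo divisors of $B$, and it explains why the closed formula features the counting function $N(m,b)=\#\{d^2\Mod{b}: d\mid m,\,\gcd(d,b)=1\}$.

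\emph{From the count to the closed formula.} To extract the claimed formula, I would stratify the parameterizations $(\alpha,\delta,\beta)$ by the divisor $b:=B/\gcd(B,\delta)$ of $B$, so that the distinct residues $c_K\Mod{mB}$ contributed by each $b$-stratum are indexed (up to a primitive modular factor) by square classes in $(\z/b\z)^\times$ represented by the part of $\delta$ coprime to $B$; the remaining primitive-factor count produces the Dirichlet factor $\zeta(s-1)/\zeta(s)=\sum_n\phi(n)n^{-s}$ where $\phi$ is Euler's totient. Resumming should yield the coefficient identity
\[
|S_m|=\sum_{k\mid m}\phi(m/k)\sum_{b\mid \gcd(B,k)}N(k/b,\,B/b),
\]
which after the change of variable $b\mapsto B/b$ and rearrangement of the resulting double Dirichlet series becomes exactly the stated formula for $\slzeta{L}{s}$. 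The main obstacle is implementing this combinatorial bookkeeping cleanly in the presence of non-nested intersections among the $T_\delta$; I expect that factoring $B$ into prime powers and exploiting the resulting multiplicativity in both $B$ and $m$ will be the key to taming the inclusion–exclusion.
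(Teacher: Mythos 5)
Your setup is correct and, up to exchanging the roles of the two basis vectors, it is the same reduction the paper makes: the Hermite normal form parameterization, the Gram matrix $\begin{pmatrix} 0 & mB/2 \\ mB/2 & \beta\delta B+\delta^2 c\end{pmatrix}$, the criterion $c_{K_1}\equiv c_{K_2}\Mod{mB}$ (which is exactly Proposition~\ref{prop:cond-proper-iso} after clearing denominators), the identification $a_m^+(L)=\#\bigl(\bigcup_{\delta\mid m}T_\delta\bigr)$, and the pairwise criterion $T_{\delta_1}\cap T_{\delta_2}\neq\emptyset \Leftrightarrow B\gcd(\delta_1,\delta_2)\mid \delta_1^2-\delta_2^2$ are all correct (the last one I checked uses $\gcd(B,c)=1$ exactly as you say). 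Your target coefficient identity $\lvert S_m\rvert=\sum_{k\mid m}\varphi(m/k)\sum_{b\mid\gcd(B,k)}N(k/b,B/b)$ is also precisely equivalent to the statement of the theorem.

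The genuine gap is that the proof stops exactly where the paper's real work begins: nothing after the pairwise intersection criterion is actually proved. Counting the distinct residues in $\bigcup_\delta T_\delta$ requires controlling all $k$-fold intersections (their non-emptiness and their sizes), not just the pairwise ones; in the paper this is Lemma~\ref{lem:coset.intersection} together with Propositions~\ref{prop:intersection.cond1} and \ref{pro:M.valuation}, where the divisibility $M(B;\bm d)\mid m$ plays the role your pairwise condition plays for $k=2$. One then needs to convert that inclusion--exclusion into the stated Dirichlet series, which is the content of the M\"obius-inversion step producing the divisors $b\mid B$, the equivalence of ``$b\mid C(B;\bm d)$'' with membership of all $d_i$ in a single square-class stratum $\mathcal D_b(m,\delta,t)$ (Proposition~\ref{pro:intersection.cond2}), and the resummation through $\Phi$, $\Psi$ and $X(b,t,g)$ that extracts the factors $(B/b)^{-s}$ and $\zeta(s-1)/\zeta(s)=\sum_n\varphi(n)n^{-s}$ (Propositions~\ref{pro:unioncount.delta} and \ref{prop:slzeta-sum-b_part}). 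Your proposal replaces all of this with ``stratify by $b=B/\gcd(B,\delta)$ \ldots resumming should yield the coefficient identity,'' and you yourself flag the ``combinatorial bookkeeping'' of non-nested intersections as an unresolved obstacle; since that identity is equivalent to the theorem, asserting it is asserting the conclusion. (Your stratification is at least consistent --- one can check that $T_{\delta_1}\cap T_{\delta_2}\neq\emptyset$ forces $\gcd(B,\delta_1)=\gcd(B,\delta_2)$ --- but within a stratum the count of distinct residues, including the appearance of $\#\{d^2\Mod b\}$ and of the totient factor, still has to be derived, and that derivation is the bulk of the paper's Sections~3 and~4.) Also note that appealing to multiplicativity in $m$ and $B$ alone will not suffice without justification, since $a_m^+(L)$ is defined through a union whose multiplicative structure is exactly what must be established.
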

The proof of \cref{thm:MainAnswer} is given in \cref{sec:pf-of-MainAnswer} assuming several technical lemmas, and their proofs are provided in \cref{sec:lemmas}. From \cref{thm:MainAnswer}, one can observe the following:
\begin{cor}\label{cor:asymptotics}
    The series $\slzeta{L}{s}$ has abscissa of convergence of $\Re(s)=2$. It can be meromorphically continued to $\Re(s)>1$ and has a simple pole at $s=2$.
\end{cor}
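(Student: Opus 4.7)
The plan is to extract the analytic properties of $\slzeta{L}{s}$ directly from the explicit formula in \cref{thm:MainAnswer}, separating a meromorphic prefactor from a holomorphic piece. Write
\[
\slzeta{L}{s} = \frac{\zeta(s-1)}{\zeta(s)}\cdot T(s), \qquad T(s):=\sum_{b\mid B}(B/b)^{-s}\, S_b(s),\qquad S_b(s):=\sum_{m=1}^{\infty}\frac{c_b(m)}{m^s},
\]
with $c_b(m):=\#\{d^2\bmod b : d\mid m,\ \gcd(d,b)=1\}$. Since $0\le c_b(m)\le b$ (the total number of squares modulo $b$), each $S_b(s)$ has coefficients bounded by a constant in $m$, hence converges absolutely and defines a holomorphic function on $\{\Re(s)>1\}$. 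The finite sum $T(s)$ inherits the same property.

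For the prefactor I would invoke that $\zeta(s-1)$ extends meromorphically to $\mathbb{C}$ with a single simple pole at $s=2$ of residue $1$, while $\zeta(s)$ is holomorphic and non-vanishing on $\{\Re(s)>1\}$ by its Euler product. Consequently $\zeta(s-1)/\zeta(s)$ is meromorphic on $\{\Re(s)>1\}$ with a unique simple pole at $s=2$, and multiplying by the holomorphic $T(s)$ yields a meromorphic continuation of $\slzeta{L}{s}$ to $\{\Re(s)>1\}$ whose only possible singularity is a simple pole at $s=2$.

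The only non-formal step is verifying that $T(2)\ne 0$, so that this pole is not cancelled. I would isolate the $b=1$ summand: since $c_1(m)=1$ for every $m\ge 1$, it equals $B^{-s}\zeta(s)$, which at $s=2$ is $B^{-2}\zeta(2)>0$. The remaining summands $(B/b)^{-s}S_b(s)$ take non-negative real values at $s=2$ because $c_b(m)\ge 0$, so $T(2)>0$ and the residue $T(2)/\zeta(2)$ at $s=2$ is non-zero.

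Finally, for the abscissa of convergence, $a_m^+(L)$ is bounded above by the total number of index-$m$ sublattices of $\mathbb{H}\cong\Z^2$, which is $\sigma_1(m)=O(m^{1+\varepsilon})$, so $\slzeta{L}{s}$ converges absolutely on $\{\Re(s)>2\}$. Since its coefficients are non-negative and the meromorphic continuation above exhibits a pole at $s=2$ while remaining holomorphic throughout the real interval $(2,\infty)$, Landau's theorem on Dirichlet series with non-negative coefficients forces the abscissa of convergence to be exactly $\Re(s)=2$.
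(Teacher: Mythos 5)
Your proof is correct and takes essentially the same route as the paper: both use the formula of \cref{thm:MainAnswer} to write $\slzeta{L}{s}$ as $\zeta(s-1)$ times a Dirichlet series with bounded coefficients (holomorphic on $\Re(s)>1$), so that the continuation, the simple pole at $s=2$, and the abscissa all come from the factor $\zeta(s-1)$. Your extra checks---the non-vanishing of the remaining factor at $s=2$ via the $b=1$ term, and the bound $a_m^+(L)\le\sigma_1(m)$ giving convergence on $\Re(s)>2$---simply make explicit steps the paper leaves implicit.
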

\begin{proof}
The Dirichlet series
\[
\sum_{m=1}^{\infty} \frac{ \#\{d^2 \Mod{b} : d \mid m, \gcd(d,b)=1 \}}{m^s}
\]
is holomorphic in $\Re(s) >1$, as its coefficients are bounded by $\varphi(b)$. So everything on the right side of the equation of Theorem \ref{thm:MainAnswer} except $\zeta(s-1)$ is holomorphic in $\Re(s)>1$. This shows that $\slzeta{L}{s}$ and $\zeta(s-1)$ shares same convergence properties on the region $\Re(s)>1$.
\end{proof}
For $b\in\N$, let us write $U_b=(\Z/b\Z)^{\times}$ for the unit group of $\Z/b\Z$ and $U_b^2=\{a^2:a\in U_b\}$ for the set of squares of the units in $\Z/b\Z$. In Section \ref{sec:comb}, we will show that $\slzeta{L}{s}$ can also be described as:
\begin{thm}\label{thm:MainAnswer2}
Let $L$ be a sublattice of $\mathbb{H}$ and let $B\in\n$ be such that $B\z=[\mathbb{H}:L](\mathfrak{n}L)^{-1}$. We have
    \begin{align*}
    \slzeta{L}{s}     
   &=\zeta(s-1)\sum_{b \mid B} (B/b)^{-s}\left(  \lvert U_b^2\rvert  - \sum_{{\bm w}\in W} c_b({\bm w}) \prod_{u \in  U_b^2  } \frac{\mathrm{Sym}_{w_u}(\{p^{-s} : p \in \mathcal{P}_u^{(b)} \} )}{\prod_{p \in \mathcal{P}_u^{(b)}}(1-p^{-s})^{-1}} \right),
\end{align*}
where $W$, $c_b(\bm{w})$, $\mathrm{Sym}_k(X)$, $\mathcal{P}_u^{(b)}$ are defined in Section \ref{sec:comb}.
\end{thm}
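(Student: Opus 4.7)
The plan is to rewrite the inner Dirichlet series of \cref{thm:MainAnswer} as an explicit combinatorial combination of symmetric-function expressions in $\{p^{-s}\}$. Let $N_b(m) := \#\{d^2 \Mod b : d \mid m,\ \gcd(d,b)=1\}$, so \cref{thm:MainAnswer} reads $\slzeta{L}{s} = \tfrac{\zeta(s-1)}{\zeta(s)} \sum_{b \mid B}(B/b)^{-s}\sum_m N_b(m)/m^s$. Writing $X_m := \{d^2 \Mod b : d \mid m, \gcd(d,b)=1\} \subseteq U_b^2$ and $N_b(m) = |U_b^2| - |U_b^2 \setminus X_m|$, and using $\sum_m m^{-s} = \zeta(s)$, the $|U_b^2|$ piece exactly matches the first term of \cref{thm:MainAnswer2}. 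What remains is to identify
\[
\frac{1}{\zeta(s)}\sum_{m=1}^{\infty}\frac{|U_b^2 \setminus X_m|}{m^s} \;=\; \sum_{\bm w \in W} c_b(\bm w)\prod_{u \in U_b^2}\frac{\mathrm{Sym}_{w_u}(\{p^{-s} : p \in \mathcal{P}_u^{(b)}\})}{\prod_{p \in \mathcal{P}_u^{(b)}}(1-p^{-s})^{-1}}.
\]

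The structural core is a clean description of $X_m$. Letting $\mathcal{P}_u^{(b)}$ be the primes $p$ with $\gcd(p,b)=1$ and $p^2 \equiv u \pmod{b}$, these sets partition the primes coprime to $b$. For $d = \prod_p p^{b_p}$ a divisor of $m$ coprime to $b$, computation in $U_b^2$ gives $d^2 \equiv \prod_{u} u^{\sum_{p \in \mathcal{P}_u^{(b)}} b_p} \pmod{b}$, hence
\[
X_m \;=\; \prod_{u \in U_b^2} \{u^0, u^1, \ldots, u^{A_u(m)}\}, \qquad A_u(m) := \sum_{p \in \mathcal{P}_u^{(b)}} v_p(m).
\]
In particular $|U_b^2 \setminus X_m|$ depends on $m$ only through $(A_u(m))_u$, and stabilizes once each $A_u$ reaches the order of $u$ in $U_b^2$. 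The set $W$ will accordingly be a finite collection of stabilized profiles $\bm w = (w_u)_u$, and $c_b(\bm w)$ will record the associated complement size.

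The rest is Dirichlet-series bookkeeping. Every positive integer factors uniquely as $m = m_{(b)}\prod_u m_u$ with $m_{(b)}$ supported on primes dividing $b$ and each $m_u$ on $\mathcal{P}_u^{(b)}$; since $|U_b^2 \setminus X_m|$ sees only $(A_u(m_u))_u = (\Omega(m_u))_u$,
\[
\sum_m \frac{|U_b^2 \setminus X_m|}{m^s} \;=\; \prod_{p \mid b}(1-p^{-s})^{-1}\sum_{\bm A \in \nn^{U_b^2}} |U_b^2 \setminus X_{\bm A}|\prod_u h_{A_u}(\{p^{-s} : p \in \mathcal{P}_u^{(b)}\}),
\]
where $h_A$ is the complete-homogeneous symmetric function of degree $A$ (integers supported on $\mathcal{P}_u^{(b)}$ with $\Omega = A$ correspond to multisets of size $A$ drawn from $\mathcal{P}_u^{(b)}$). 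Collecting terms by the stabilization profile $\bm w$, and invoking the identity $H(t)E(-t) = 1$ linking complete-homogeneous with elementary symmetric functions together with $\prod_{p \nmid b}(1-p^{-s}) = \zeta(s)^{-1}\prod_{p \mid b}(1-p^{-s})^{-1}$, both the factor $\prod_{p \mid b}(1-p^{-s})^{-1}$ and the $\zeta(s)^{-1}$ cancel, and each $u$-block contracts to the stated factor $\mathrm{Sym}_{w_u}/\prod_{p \in \mathcal{P}_u^{(b)}}(1-p^{-s})^{-1}$.

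The main obstacle is the combinatorics of stabilization: one must arrange $W$ and $c_b(\bm w)$ so that the tail sums $\sum_{A \ge w} h_A(\{p^{-s}\})$ at the saturation threshold and the individual contributions $h_w$ below threshold combine---via Newton's identity or a direct M\"obius/inclusion--exclusion---into exactly $\mathrm{Sym}_{w_u}(\{p^{-s}\}) \cdot \prod_{p \in \mathcal{P}_u^{(b)}}(1-p^{-s})$ per $u$-block. Once this matching is carried out (which is the content of \cref{sec:comb}), the simultaneous cancellation of $\zeta(s)^{-1}$ and of the $b$-primary factors is routine.
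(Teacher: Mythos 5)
Your proposal is correct and follows essentially the same route as the paper: the paper argues per residue $t\in U_b^2$ with the indicator $X(b,t,m)$ and the sets $\mathcal{S}_t,\mathcal{T}_t\subseteq W$, then sums over $t$ to produce $c_b(\bm w)$, whereas you count the complement $\lvert U_b^2\setminus X_m\rvert$ directly; but the underlying computation---your description of $X_m$ through the profile $(A_u(m))_u$, the factorization $m=m_{(b)}\prod_u m_u$ yielding $\prod_{p\mid b}(1-p^{-s})^{-1}\prod_u \mathrm{Sym}_{A_u}(\{p^{-s}:p\in\mathcal{P}_u^{(b)}\})$, and the Euler-product cancellation of $\zeta(s)$---is exactly the paper's. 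The one point to correct is the ``main obstacle'' you flag at the end: it does not exist. In the paper $W$ is the full infinite set $\Z_{\ge 0}^{\lvert U_b^2\rvert}$, not a finite list of stabilized profiles, and $c_b(\bm w)=\#\{t\in U_b^2:\bm w\notin\mathcal{T}_t\}$, which by your own formula for $X_m$ equals $\lvert U_b^2\setminus X_m\rvert$ whenever $\Omega(m)=\bm w$; consequently, after the $\zeta(s)$ cancellation you describe, the grouped series is literally the second term of the theorem, and no tail-sum bookkeeping, Newton's identity, or $H(t)E(-t)=1$ manipulation is needed.
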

This allows us to explicitly compute $\slzeta{L}{s}$  in Section \ref{sec:explicit.exm}. Utilizing these explicit formulas, we are able to explore the number of {\it one-lattice proper classes}, the proper classes that contain only one lattice. Let us consider the ratio
\[
    P_{L}^{+}(X):=\frac{s_{X}^{+}(L)}{s_{X}(\Z^2)},
\]
where
\[
    s_{X}(\Z^2):=\#\{\textrm{sublattices of $\Z^2$ of index $<X$}\} \quad \text{and} \quad s_{X}^{+}(L):=\sum_{m<X}a_{m}^{+}(L).
\]
One may observe that among the $s_X^+(L)$ proper classes, at least $2s_X^+(L)-s_X(\z^2)$ are one-lattice proper classes. Therefore the proportion is at least $2P_L^+(X)-1$. The data in Table \ref{table1} indicate that a large portion of the proper classes are one-lattice proper classes---equivalently, a large portion of the sublattices of $L$ are not properly isometric to any other sublattices.

Asymptotics of $s_X(\z^2)$ and $s_X^+(L)$ as $X\ra \infty$ can be obtained from the analytic properties of corresponding Dirichlet series $\zeta_{\Z^2}(s)=\zeta(s)\zeta(s-1)$  and $\slzeta{L}{s}$. Note that both of them have abscissa of convergence of $\Re(s)=2$ and simple poles at $s=2$. Hence standard Tauberian techniques (eg. see \cite[p.121]{Nark}) immediately give asymptotics of
\begin{align*}
    s_{X}(\Z^2)\sim \frac{\zeta(2)}{2}X^{2} \quad \text{and} \quad s_{X}^{+}(L)\sim\frac{1}{2} \left(\mathrm{Res}_{s=2}\slzeta{L}{s}\right) X^2
\end{align*}
as $X\rightarrow \infty$. Since $\lim_{s\ra2}(s-2)\zeta(s-1)=1$, we have
\[
\mathrm{Res}_{s=2}\slzeta{L}{s} = \lim_{s \rightarrow 2} (s-2) \slzeta{L}{s} = \lim_{s \rightarrow 2} \frac{\slzeta{L}{s}}{\zeta(s-1)}.
\]
Thus the value can be obtained by substituting $s=2$ in the explicit formulas for $\frac{\slzeta{L}{s}}{\zeta(s-1)}$ in Section \ref{sec:explicit.exm}. Computing the approximate value for the residue, we obtain the approximate values of
\[
r := \lim_{X \rightarrow \infty} P_L^{+}(X) = \frac{1}{\zeta(2)} \left(\mathrm{Res}_{s=2}\slzeta{L}{s}\right)
\]
as listed in Table \ref{table1} for those lattices $L\subseteq\mathbb{H}$ with $B\z=[\mathbb{H}:L] (\mathfrak{n}L)^{-1}$ for various $B\in\N$.

\begin{table}[b]
    \centering
        \label{table1}
    \begin{tabular}{c|c|c @{\hskip 2cm} c|c|c}
        \toprule
        $B$ & $r$ & $2r-1$ & $B$ & $r$ & $2r-1$ \\\hline
        1  & 0.6079 & 0.2159 & 9  & 0.9207 & 0.8415 \\
        2  & 0.7599 & 0.5198 & 10 & 0.9310 & 0.8619 \\
        3  & 0.6755 & 0.3510 & 12 & 0.8866 & 0.7731 \\
        4  & 0.7979 & 0.5959 & 14 & 0.9616 & 0.9231 \\
        5  & 0.8491 & 0.6981 & 15 & 0.8945 & 0.7890 \\
        6  & 0.8443 & 0.6887 & 18 & 0.9647 & 0.9293 \\
        7  & 0.9137 & 0.8274 & 20 & 0.9514 & 0.9029 \\
        8  & 0.8074 & 0.6148 & 24 & 0.8971 & 0.7942 \\
        \bottomrule
    \end{tabular}
    \medskip
    \caption{Proportions of proper isometry classes in $L$.}
    \end{table}
Other than \cite{KLL}, to the authors knowledge there is no known work in the literature that systematically
investigated and computed the zeta functions enumerating proper class of quadratic lattices and their various asymptotics. Together with \cite{KLL}, we hope this article provides a reasonable starting point for a
more general theory of zeta functions of quadratic lattices.
\subsection{Notations and conventions}
We introduce several notations and conventions. Let $\zeta(s)$ be the Riemann zeta function, and for a prime $p$ let \[\zeta_{p}(s):=\frac{1}{1-p^{-s}}\] be the local Riemann zeta function such that $\zeta(s)=\prod_{p,\,prime}\zeta_{p}(s)$ is the Euler product. For an integer $n$ and a prime $p$, let
\[
\nu_{p}(n)=\begin{cases}
\max\{k\in\N_{0} : p^{k}|n\}&\textrm{if }n\neq0,\\
\infty&\textrm{if }n=0,
\end{cases}
\]
be the $p$-adic valuation of an integer $n$. We also define the $p$-adic valuation of rational numbers to be 
\[\nu_{p}\left(\frac{r}{s}\right)=\nu_{p}(r)-\nu_{p}(s).\]
For $x,y$  positive integers, we denote by $x \mid y^{\infty}$ if $x \mid y^k$ for some $k\in\z$. Also define 
\[
\gcd(x, y^{\infty} ) = \prod_{p : \nu_p(y)>0} p^{\nu_p(x)}.
\]
For $\bm{d}=(d_1,\ldots,d_k)\in\n^k$, we write $\gcd(\bm{d})=\gcd(d_1,\ldots,d_k)$, $\lcm(\bm{d})=\lcm(d_1,\ldots,d_k)$, etc. for simplicity. We write $\sigma_{1}(n)=\sum_{d|n}d$ to denote the sum of divisors function. For $b\in\N$, we write $U_b=(\Z/b\Z)^{\times}$ for the unit group of $\Z/b\Z$ and $U_b^2=\{a^2:a\in U_b\}$ for the set of squares of the units in $\Z/b\Z$. 
Any unexplained notation and terminology on $\z$-lattices can be found in \cite{OM}. 
\section{Preliminaries}
\subsection{Preliminaries: discrete subgroups of $\Q/\Z$ and their cosets}
For $x\in\Q$ and $d\in\Z_{>0}$, let 
\[\left( x+\frac{1}{d}\Z\right)/\Z:=\left\{x+\Z, \left(x+\frac{1}{d}\right)+\Z,\ldots,\left(x+\frac{d-1}{d}\right)+\Z\right\}\subseteq\Q/\Z.\]
Finite subgroups of $\Q/\Z$ are all of the form $d^{-1}\Z / \Z$ for integers $d>0$, and their cosets are of form $\left( x + d^{-1} \Z \right) /\Z$ for $x \in \Q$.  It is obvious that $\# \left( (x+d^{-1} \Z)/ \Z \right) = d$.

\begin{lem}\label{lem:coset.intersection}
Let $d_1, \ldots, d_k$ be positive integers and let $x_1,\ldots,x_k\in\q$. Then we have
\[
\bigcap_{i=1}^{k} \frac{1}{d_i}\Z / \Z = \frac{1}{\gcd(\bm{d})}\Z / \Z.
\]
Moreover, if $\bigcap_{i=1}^{k} (x_i + d_i^{-1} \Z )/ \Z \neq \emptyset$, then there exists $x\in\q$ such that
\[
    \bigcap_{i=1}^{k} \left(x_i + \frac{1}{d_i} \Z \right)/ \Z = \left(x+  \frac{1}{\gcd(\bm{d})} \Z \right) /\Z.
\]
\end{lem}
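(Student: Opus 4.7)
The plan is to reduce both claims to the basic observation that for $y\in\Q$, the coset $y+\Z$ lies in $\frac{1}{d}\Z/\Z$ if and only if $dy\in\Z$, together with B\'ezout's identity.

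For the first claim, I would characterize membership in $\bigcap_{i=1}^{k}\frac{1}{d_i}\Z/\Z$ by the system of divisibility conditions $d_i y\in\Z$ for all $i$. Since $\gcd(\bm d)$ is a $\Z$-linear combination of the $d_i$, these conditions collectively force $\gcd(\bm d)y\in\Z$; conversely, because $\gcd(\bm d)\mid d_i$ for every $i$, the single condition $\gcd(\bm d)y\in\Z$ recovers all the $d_i y\in\Z$. Thus the intersection coincides with $\{y+\Z : \gcd(\bm d)y\in\Z\}=\frac{1}{\gcd(\bm d)}\Z/\Z$.

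For the second claim I would invoke the standard fact that a nonempty intersection of cosets of subgroups in an abelian group is a single coset of the intersection of those subgroups. Concretely, pick any representative $x\in\Q$ of an element of $\bigcap_i (x_i+d_i^{-1}\Z)/\Z$. Then $x-x_i\in\frac{1}{d_i}\Z$ for every $i$, so each coset $(x_i+d_i^{-1}\Z)/\Z$ equals $(x+d_i^{-1}\Z)/\Z$. A point $y+\Z$ then lies in the intersection iff $y-x\in\frac{1}{d_i}\Z$ for all $i$, which by the argument for the first claim is equivalent to $y-x\in\frac{1}{\gcd(\bm d)}\Z$. Hence the intersection is exactly $(x+\gcd(\bm d)^{-1}\Z)/\Z$.

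The argument is purely structural and follows from B\'ezout plus a standard coset manipulation, so I do not anticipate a serious obstacle; the only care required is to navigate the slight notational friction between cosets $(x+d^{-1}\Z)/\Z$ inside $\Q/\Z$ and the underlying cosets $x+d^{-1}\Z$ inside $\Q$, making sure that containments of the form $\Z\subseteq\frac{1}{d}\Z$ are used when translating between the two.
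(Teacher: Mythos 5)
Your proof is correct and follows essentially the same route as the paper: translating an element of the nonempty intersection into a common representative $x$, rewriting each coset as $(x+d_i^{-1}\Z)/\Z$, and reducing to the subgroup identity $\bigcap_i d_i^{-1}\Z/\Z=\gcd(\bm d)^{-1}\Z/\Z$. The only cosmetic difference is that you establish that subgroup identity directly via B\'ezout, whereas the paper dispatches it by induction on $k$; the content is the same.
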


\begin{proof}
    The first part follows by an induction argument on $k$. To prove the second part, note that if $x+\z \in \bigcap_{i=1}^k(x_i + d_i^{-1} \Z)/\Z$, then $(x_i + d_i^{-1} \Z)/\Z = (x + d_i^{-1} \Z)/\Z$ for all $i$. Thus we have
\begin{equation*}
\bigcap_{i=1}^{k}\left(x_i + \frac{1}{d_i} \Z\right)/\Z = \bigcap_{i=1}^{k}\left(x + \frac{1}{d_i}\Z\right)/\Z = x  + \bigcap_{i=1}^{k} \frac{1}{d_i} \Z/\Z = \left(x + \frac{1}{\gcd(\bm{d})} \Z\right) / \Z. \qedhere
\end{equation*}
\end{proof}

\subsection{Sublattices of $\mathbb{H}$}
Let us discuss the sublattices of $\mathbb{H}$.
\begin{lem}\label{lem:Gram-Matrix-L} 
    Any sublattice $L$ of $\mathbb{H}$ has a basis with the Gram matrix of the form
    \[
    \begin{pmatrix}
        nA & \frac{nB}{2}\\
        \frac{nB}{2} & 0
    \end{pmatrix} \quad \text{where } n>0, \ 1\le A \le B \text{ and } \gcd(A,B)=1.
    \]
\end{lem}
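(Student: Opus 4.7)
The strategy is to build a $\Z$-basis of $L$ whose second vector is isotropic, which forces the $(2,2)$-entry of its Gram matrix to vanish, and then to normalize the first vector. Identifying $\mathbb{H} = \Z e_1 \oplus \Z e_2$ with $Q(ae_1+be_2) = ab$ and $B(e_1,e_2) = 1/2$, the isotropic vectors of $\mathbb{H}$ are exactly those on the two coordinate axes $\Z e_1$ and $\Z e_2$, so my plan is to use a primitive element of $L \cap \Z e_2$ as the isotropic basis vector.

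The first step is to apply the projection $\pi_1 : \mathbb{H} \to \Z e_1$. Because $L$ has full rank $2$, both $\pi_1(L) = d_1 \Z e_1$ and its kernel $L \cap \Z e_2 = c_2 \Z e_2$ are nonzero for some positive integers $d_1$ and $c_2$. The short exact sequence $0 \to L \cap \Z e_2 \to L \to \pi_1(L) \to 0$ splits since $\pi_1(L) \cong \Z$ is free, so I can choose $v \in L$ with $\pi_1(v) = d_1 e_1$ --- explicitly, $v = d_1 e_1 + t e_2$ for some $t \in \Z$ --- and $\{v,\, c_2 e_2\}$ is then a $\Z$-basis of $L$.

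Since $v$ is determined only modulo $c_2 \Z e_2$, the integer $t$ is free modulo $c_2$; I would pick the representative in $\{1, 2, \ldots, c_2\}$ (taking $t = c_2$ rather than $t = 0$ to keep the $(1,1)$-entry positive). A direct computation then gives the Gram matrix
\[
\begin{pmatrix} d_1 t & d_1 c_2/2 \\ d_1 c_2/2 & 0 \end{pmatrix},
\]
and setting $g = \gcd(t, c_2)$, $n = d_1 g$, $A = t/g$, $B = c_2/g$ yields the required form, with $\gcd(A,B) = 1$ by construction and $1 \le A \le B$ following from $1 \le t \le c_2$. The only mildly delicate point is the edge case $t \equiv 0 \pmod{c_2}$, where shifting to $t = c_2$ (a legitimate basis choice) produces $A = B = 1$ and $n = d_1 c_2$, namely the scaled hyperbolic plane --- whereas the naive representative $t = 0$ would violate $A \ge 1$. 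The rest is routine linear algebra.
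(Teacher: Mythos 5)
Your proof is correct and follows essentially the same route as the paper: both arguments produce a triangular basis $\{\alpha e_1 + t e_2,\ \gamma e_2\}$ of $L$ (the paper asserts it via row reduction, you justify it via the projection onto $\Z e_1$ and the split exact sequence) and then normalize the coefficient of $e_2$ in the first vector modulo the second vector and extract the gcd to reach $1 \le A \le B$, $\gcd(A,B)=1$. Your handling of the representative $t \in \{1,\dots,c_2\}$ (avoiding $t=0$) matches the paper's choice of $1 \le A \le B$ with $A \ge 1$, so nothing is missing.
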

\begin{proof} 
The proof follows merely from the row reduction of integral matrices. Let $\mathbb{H}=\z e_1 + \z e_2$ with $Q(e_1)=Q(e_2)=0$ and $B(e_1,e_2)=\frac{1}{2}$.
    Note that one may take a basis of $L$ as $L=\z(\alpha e_1+\beta e_2)+\z(\gamma e_2)$
    for some $\alpha,\beta,\gamma\in\z$. Let $A'=\frac{\beta}{\gcd(\beta,\gamma)}$, $B=\frac{\gamma}{\gcd(\beta,\gamma)}$ and $n=\alpha\cdot\gcd(\beta,\gamma)$. Since $\gcd(A',B)=1$, there are $k\in\z$ and $1\le A \le B$ with $\gcd(A,B)=1$ such that $A'=Bk+A$. Then we have
    \[
        L=\z(\alpha e_1+(\beta-\gamma k)e_2)+\z(\gamma e_2)\cong \begin{pmatrix} nA & \frac{nB}{2}\\ \frac{nB}{2} & 0 \end{pmatrix}. \qedhere
    \]
\end{proof}
Next we describe an equivalent condition for two sublattices of $L\subseteq\mathbb{H}$ to be properly isometric.
\begin{prop}\label{prop:cond-proper-iso}
Let $L$ be a sublattice of $\mathbb{H}$ such that 
\[
L=\z e_1+\z e_2=\begin{pmatrix} A & B/2 \\ B/2 &0 \end{pmatrix}.
\]
Then two sublattices $K_i=\z(a_ie_1+b_ie_2) + \z(d_ie_2)$ of $L$ with $a_i,d_i\in\n$, $b_i\in\z$ and $i=1,2$ are properly isometric if and only if 
\begin{equation}\label{eqn:properly-equiv-cond}
a_1 d_1 = a_2 d_2 \quad \text{and} \quad \left(\frac{A}{B} \frac{a_1}{d_1} + \frac{b_1}{d_1}\right) - \left(\frac{A}{B} \frac{a_2}{d_2} + \frac{b_2}{d_2}\right) \in\z.
\end{equation}
\end{prop}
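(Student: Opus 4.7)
The plan is to first describe $O^+(V)$ explicitly using the two isotropic lines of $V$, then compute the action of its elements on the basis $\{e_1,e_2\}$, and finally read off both conditions in \eqref{eqn:properly-equiv-cond} by comparing generators of $\sigma(K_1)$ with those of $K_2$.

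First, since $V$ is a hyperbolic plane, it contains exactly two isotropic lines. Any $\sigma\in O(V)$ permutes these lines, and a direct matrix computation in a hyperbolic basis shows that proper isometries fix each isotropic line whereas improper ones swap them. Fixing a hyperbolic basis $f_1,f_2$ with $Q(f_i)=0$ and $B(f_1,f_2)=\tfrac12$, one therefore has
\[
O^+(V)=\{\sigma_t:t\in\Q^\times\},\qquad \sigma_t(f_1)=tf_1,\quad \sigma_t(f_2)=t^{-1}f_2.
\]
Since $Q(e_2)=0$, I take $f_2=e_2$ and solve for $f_1$ to obtain $e_1=Bf_1+(A/B)f_2$; substituting yields
\[
\sigma_t(e_1)=te_1+\tfrac{A}{B}(t^{-1}-t)\,e_2,\qquad \sigma_t(e_2)=t^{-1}e_2.
\]

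Next I extract the first condition in \eqref{eqn:properly-equiv-cond} using invariants. For a sublattice $K=\Z(ae_1+be_2)+\Z(de_2)$ with $a,d\in\N$, the projection $\pi\colon L\to L/\Z e_2\cong\Z$ satisfies $\pi(K)=a\Z$ and $K\cap\Z e_2=d\Z e_2$, so $a$ and $d$ are intrinsic invariants of $K$. Applied to $\sigma_t(K_1)$, whose generators have first coordinates $a_1t$ and $0$ with second-coordinate generator $d_1t^{-1}e_2$, the equality $\sigma_t(K_1)=K_2$ forces $|a_1t|=a_2$ and $|d_1t^{-1}|=d_2$, and hence $a_1d_1=a_2d_2$. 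Since $\sigma_{-t}=-\sigma_t$ sends any lattice to the same image as $\sigma_t$, I may assume $t=a_2/a_1=d_1/d_2>0$.

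With $a$ and $d$ matched, $\sigma_t(K_1)=K_2$ reduces to the single congruence
\[
a_1\tfrac{A}{B}(t^{-1}-t)+b_1t^{-1}\equiv b_2 \pmod{d_2}.
\]
Dividing the difference by $d_2$, substituting $t=d_1/d_2$, and using the identity $a_1d_1/d_2^2=a_2/d_2$ to collapse the $A/B$-terms, this rearranges to
\[
\Bigl(\tfrac{A}{B}\tfrac{a_1}{d_1}+\tfrac{b_1}{d_1}\Bigr)-\Bigl(\tfrac{A}{B}\tfrac{a_2}{d_2}+\tfrac{b_2}{d_2}\Bigr)\in\Z,
\]
which is exactly the second condition of \eqref{eqn:properly-equiv-cond}; the converse follows by reversing the steps, choosing $t=d_1/d_2$ and verifying $\sigma_t(K_1)=K_2$. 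The only nontrivial step is the identification of $O^+(V)$ via the isotropic lines; once that is in hand, the remainder is a direct manipulation hinging on $a_1d_1=a_2d_2$.
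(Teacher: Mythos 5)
Your proof is correct and follows essentially the same route as the paper: identify $O^+(V)$ as the one-parameter family $\sigma_t$ (you via the two isotropic lines, the paper by solving $T M T^t = M$ with $\det T=1$, which yields the same group), then impose $\sigma_t(K_1)=K_2$ and read off $a_1d_1=a_2d_2$ together with the congruence that rearranges to the second condition of \eqref{eqn:properly-equiv-cond}. The only cosmetic differences are that you deduce $a_1d_1=a_2d_2$ from matching the $e_1$-projection and the intersection with $\Q e_2$ (the paper uses a discriminant comparison) and that you compare generators directly rather than phrasing the condition as a change-of-basis matrix lying in $\GL_2(\Z)$.
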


\begin{proof} Note that $K_i$ is a sublattice of $L$ of index $a_id_i$, hence of discriminant $a_i^2d_i^2 dL$ and that two isometric lattices have the same discriminant. Thus we may always assume that $a_1d_1=a_2d_2$. 

Let $V=\q L$. We first describe $O^+(V)$. Let $\sigma\in O^+(V)$ and let us correspond $\sigma$ with the $2\times 2$ matrix $T_\sigma=(t_{ij})$, where $t_{ij}\in\z$ satisfy $\sigma(e_i)=t_{1i} e_1+ t_{2i} e_2$ for $i=1,2$. Then we have
\[
\left\{ T_\sigma : \sigma \in O^+(V) \right\} = \left\{ T_\sigma\in\SL_2(\q) : T_\sigma \begin{pmatrix} A & \frac{B}{2} \\ \frac{B}{2} & 0\end{pmatrix} T_\sigma^t = \begin{pmatrix} A & \frac{B}{2} \\ \frac{B}{2} & 0\end{pmatrix} \right\} = \left\{ \begin{pmatrix} z &  \frac{A(1-z^2)}{Bz} \\0 & z^{-1} \end{pmatrix}  : z \in \Q^{\times}\right\}.
\]

Note that $K_1$ and $K_2$ are isometric if and only if there exists $\sigma\in O^+(V)$ such that $\sigma(K_1)=K_2$. In terms of matrix presentation, this is equivalent to the existence of  $z \in \Q^{\times}$ such that
\begin{equation}\label{eqn:isom-matrix}
\begin{pmatrix}
\frac{a_1}{a_2}z& -\frac{a_1b_2}{a_2d_2}z+\frac{a_1A(1-z^2)}{d_2Bz}+\frac{b_1}{d_2z}\\
0& \frac{d_1}{d_2}\frac{1}{z}
\end{pmatrix}
=\begin{pmatrix} a_1 & b_1 \\ 0 & d_1 \end{pmatrix} \begin{pmatrix} z & \frac{A(1-z^2)}{Bz} \\  0 & z^{-1} \end{pmatrix}
\begin{pmatrix} a_2 & b_2 \\ 0 & d_2 \end{pmatrix}^{-1} \in \GL_2(\Z).
\end{equation}
Using $a_1d_1=a_2d_2$, we have $\frac{a_1}{a_2} \in \Z$ and $\frac{d_1}{d_2} \frac{1}{z} = \left( \frac{a_1}{a_2} z \right)^{-1} \in \Z$, and hence $z=\pm \frac{a_2}{a_1}$. Plugging this into \eqref{eqn:isom-matrix}, the $(1,2)$-entry of the first matrix of \eqref{eqn:isom-matrix} is an integer if and only if the latter condition in \eqref{eqn:properly-equiv-cond} holds.
\end{proof}

\section{Proof of \cref{thm:MainAnswer}} \label{sec:pf-of-MainAnswer}

In this section, we provide the proof of \cref{thm:MainAnswer} assuming that several technical lemmas are true whose proofs will be given in the following section. 
\subsection{Evaluation of $a_m^+(L)$} Let $L$ be a sublattice of $\mathbb{H}$. By \cref{lem:Gram-Matrix-L} and since $\slzeta{L}{s}$ is invariant under the scaling of $L$, we may assume that 
\[
    L=\begin{pmatrix} A & B/2 \\ B/2 &0 \end{pmatrix}
\]
for some $A,B\in\z$ with $1\le A\le B$ and $\gcd(A,B)=1$. By \cref{prop:cond-proper-iso}, the proper classes of sublattices of $L$ of index $m$ are in one-to-one correspondence with elements of $\bigcup_{d \mid m} S_{m,d, A/B}$, where
\[
S_{m,d,A/B} := \left(\frac{A}{B} \frac{m/d}{d} + \frac{1}{d} \Z \right) / \Z.
\]
By inclusion-exclusion principle, we obtain
\begin{equation}\label{eqn:a(m)-inclusion-exclusion}
a_{L}^{+}(m) = \# \left( \bigcup_{d \mid m} S_{m,d,A/B} \right) = \sum_{k=1}^{\infty} (-1)^{k-1} \sum_{ \substack{d_1< d_2< \cdots < d_k\\ d_i | m}} \# \left( \bigcap_{i=1}^{k} S_{m,d_i, A/B} \right).
\end{equation} 
Note that \cref{lem:coset.intersection} shows that for $d_i\mid m$,
\begin{equation}\label{eqn:size-of-intersection}
  \bigcap_{i=1}^{k} S_{m,d_i, A/B}  \neq \emptyset \quad \Rightarrow \quad  \# \left( \bigcap_{i=1}^{k} S_{m,d_i, A/B} \right) = \gcd(\bm{d}).
\end{equation}
Later, it will be shown in \cref{prop:intersection.cond1} that for $d_i\mid m$, pairwise distinct, we have
\begin{equation}\label{eqn:intersection-nonempty-cond}
\bigcap_{i=1}^{k} S_{m,d_i, A/B} \neq \emptyset \quad \text{ if and only if } \quad  M(B;\bm{d}) \mid m,
\end{equation}
where $M(B;\bm{d})$ is defined to be the unique positive integer satisfying
\begin{equation}\label{eqn:defn-M}
M(B;\bm{d}) \Z = \left( \bigcap_{1 \le i \le k} d_i \Z \right) \cap \left( \bigcap_{1\le i<j \le k} \left( B \gcd(\bm{d}) \left(\frac{d_i}{d_j} - \frac{d_j}{d_i} \right)^{-1} \right) \Z \right).
\end{equation}
Plugging \eqref{eqn:size-of-intersection} and \eqref{eqn:intersection-nonempty-cond} into the last inner sum of \eqref{eqn:a(m)-inclusion-exclusion} and noting that $d_i\mid M(B;\bm{d})$, we have
\begin{equation}\label{eqn:a(m)-final}
a_m^+(L)= \sum_{k=1}^{\infty} (-1)^{k-1} \sum_{\substack{1\le d_1 < \cdots <d_k \\ M(B;\bm{d}) \mid m}} \gcd(\bm{d}).
\end{equation}

\subsection{Evaluation of $\slzeta{L}{s}$--Beginning} Now we plug \eqref{eqn:a(m)-final} into $\slzeta{L}{s}$ and obtain
\begin{align}
\zeta_L^{\SL}(s) &= \sum_{m=1}^{\infty} m^{-s} a_m^+(L) = \sum_{m=1}^{\infty}m^{-s} \sum_{k=1}^{\infty} (-1)^{k-1} \sum_{\substack{1 \le d_1 < \cdots < d_k \\ M(B;\bm{d}) \mid m} } \gcd(\bm{d}) \nonumber \\
&= \sum_{k=1}^{\infty} (-1)^{k-1} \sum_{1 \le d_1 < \cdots< d_k} \gcd(\bm{d}) \sum_{M(B;\bm{d}) \mid m} m^{-s} \nonumber \\ 
&= \sum_{k=1}^{\infty} (-1)^{k-1} \sum_{1 \le d_1 < \cdots< d_k} \gcd(\bm{d})  M(B;\bm{d})^{-s} \zeta(s). \label{eqn:slzeta-M-final}
\end{align}
In order to deduce the formula further, let us define 
\begin{equation}\label{eqn:defn-C}
C(B;\bm{d}) = \dfrac{B \cdot \mathrm{lcm}(\bm{d})}{M(B;\bm{d})}
\end{equation}
and consider the following equality that holds for any $N\in\n$ obtained by the M\"obius Inversion Formula
\begin{equation}\label{eqn:Ns-Mobius}
    N^s = \sum_{b \mid N}  F(b,s), \text{ where } F(b,s) :=  \sum_{d \mid b} d^s \mu\left(\frac{b}{d}\right).
\end{equation}
Since $\mathrm{lcm}(\bm{d}) \mid M(B;\bm{d}) \mid B\cdot\mathrm{lcm}(\bm{d})$ by \eqref{eqn:defn-M}, we have $C(B;\bm{d})\in\n$ and $C(B;\bm{d})\mid B$. Hence substituting  $M(B;\bm{d})$ with $\frac{B\cdot\mathrm{lcm}(\bm{d})}{C(B;\bm{d})}$ in \eqref{eqn:slzeta-M-final}, and then plugging \eqref{eqn:Ns-Mobius} with $N=C(B;\bm{d})$ into the resulting equation, we have
\begin{multline}\label{eqn:slzeta=SumofPhi}
\slzeta{L}{s} = \zeta(s) \left( \sum_{k=1}^{\infty} (-1)^{k-1} \sum_{1 \le d_1<\cdots<d_k} \frac{\gcd(\bm{d})}{B^s \mathrm{lcm}(\bm{d})^s} \sum_{b \mid C(B;\bm{d})} F(b,s) \right)\\
= \sum_{b \mid B} \frac{F(b,s)}{B^s}  \zeta(s) \left( \sum_{k=1}^{\infty} (-1)^{k-1}  \sum_{\substack{1 \le d_1 < \cdots < d_k \\ b \mid C(B;\bm{d})}} \frac{\gcd(\bm{d})}{\mathrm{lcm}(\bm{d})^s} \right) = \sum_{b \mid B} \frac{F(b,s)}{B^s} \Phi(B,b,s),
\end{multline}
where we put
\[
\Phi(B,b,s):= \zeta(s) \left( \sum_{k=1}^{\infty} (-1)^{k-1} \sum_{\substack{1 \le d_1 < \cdots < d_k \\b \mid C(B;\bm{d})}} \frac{\gcd(\bm{d})}{ \mathrm{lcm}(\bm{d})^{s}} \right).
\]

\subsection{Evaluation of the series $\Phi(B,b,s)$}
Now let us evaluate $\Phi(B,b,s)$. We first follow the steps we have done so far in the reverse order and use \cref{lem:coset.intersection} to obtain
\begin{align}
 \Phi(B,b,s)
 %:= & \zeta(s) \left( \sum_{k=1}^{\infty} (-1)^{k-1} \sum_{\substack{1 \le d_1 < \cdots < d_k \\b \mid C(B;\bm{d})}} \frac{\gcd(d_1,\ldots,d_k)}{ \mathrm{lcm}(d_1,\ldots,d_k)^{s}} \right)  \\
=&  \sum_{k=1}^{\infty} (-1)^{k-1} \sum_{\substack{1 \le d_1 < \cdots < d_k \\b \mid C(B;\bm{d})}} \gcd(\bm{d}) \sum_{\mathrm{lcm}(\bm{d}) \mid m} m^{-s} \nonumber \\ 
=& \sum_{m=1}^{\infty} m^{-s} \sum_{k=1}^{\infty}(-1)^{k-1} \sum_{\substack{1 \le d_1 < \cdots < d_k, \, d_i \mid m \\b \mid C(B;\bm{d})}} \gcd(\bm{d}) \nonumber \\ 
=&  \sum_{m=1}^{\infty} m^{-s} \sum_{k=1}^{\infty}(-1)^{k-1} \sum_{\substack{1 \le d_1 < \cdots < d_k, \, d_i \mid m \\b \mid C(B;\bm{d})}} \# \left( \bigcap_{i=1}^{k} \frac{1}{d_i} \Z / \Z \right). \label{eqn:Phi-step1}
\end{align}

Next, consider the following partition of the set of divisors $d>0$ of $m$:
\[
\{d \in \Z_{>0} : d \mid m \} = \bigsqcup_{\delta \mid \gcd(m, b^{\infty})}  \bigsqcup_{t \in U_b^2} \mathcal{D}_b(m, \delta, t),
\]
where $\mathcal{D}_b(m, \delta, t)$ is defined for $\delta \mid \gcd(m,b^{\infty})$, $t \in U_b^2$ by
\begin{equation}\label{eqn:defn-mathcal-D}
\mathcal{D}_b(m,\delta,t) = \{ d \mid m : \delta \mid d, (d/\delta)^2 \equiv t \Mod{b} \}.
\end{equation}
It will be shown in \cref{pro:intersection.cond2} that for $d_i\mid m$ and $b\mid B$, we have $b \mid C(B;\bm{d})$ if and only if all $d_i$'s are in the same $\mathcal{D}_b(m,\delta,t)$. This leads us to compute $\Phi(B,b,s)$ further from $\eqref{eqn:Phi-step1}$ as 
\begin{align}
\Phi(B,b,s)=& \sum_{m=1}^{\infty}m^{-s} \sum_{k=1}^{\infty}(-1)^{k-1} \sum_{\delta \mid \gcd(m,b^{\infty})} \sum_{t \in  U_b^2} \sum_{\substack{1 \le d_1 < \cdots< d_k \\ d_i \in \mathcal{D}_b(m,\delta,t)}} \# \left( \bigcap_{i=1}^{k} \frac{1}{d_i} \Z / \Z \right) \nonumber  \\
=&\sum_{m=1}^{\infty} m^{-s} \sum_{\delta\mid \gcd(m, b^{\infty})}\sum_{t \in  U_b^2} \sum_{k=1}^{\infty} (-1)^{k-1} \sum_{\substack{1 \le d_1 < \cdots< d_k \\ d_i \in \mathcal{D}_b(m,\delta,t)}} \# \left( \bigcap_{i=1}^{k} \frac{1}{d_i} \Z / \Z \right) \nonumber \\
=& \sum_{m=1}^{\infty} m^{-s} \sum_{\delta \mid \gcd(m,b^{\infty})} \sum_{t \in U_b^2} \# \left( \bigcup_{d \in \mathcal{D}_b(m,\delta,t)} \frac{1}{d} \Z / \Z \right). \label{eqn:Phi-step2}
\end{align}
In the last step, we used inclusion-exclusion principle. Writing $m=m_bm_{co}$ with $m_b\mid b^\infty$ and $\gcd(m_{co},b)=1$ and applying \cref{pro:unioncount.delta}, we may verify $\Phi(B,b,s)$ further from \eqref{eqn:Phi-step2} as
\begin{align}
\Phi(B,b,s)&=\sum_{m_b \mid b^{\infty}} \sum_{\gcd(m_{co},b)=1} m_b^{-s} m_{co}^{-s} \sum_{\delta \mid m_b} \sum_{t \in  U_b^2} \delta \cdot \# \left( \bigcup_{\substack{d \mid m_{co}\\ d^2 \equiv t \Mod{b}}} \frac{1}{d} \Z / \Z  \right) \nonumber  \\
&= \left( \sum_{m_b \mid b^{\infty}} m_b^{-s} \sum_{\delta \mid m_b} \delta \right) \left( \sum_{t \in  U_b^2} \sum_{\gcd(m_{co},b)=1} m_{co}^{-s} \cdot \# \left( \bigcup_{\substack{d \mid m_{co}\\ \left(\frac{m_{co}}{d}\right)^2 \equiv t \Mod{b}}} \frac{d}{m_{co}} \Z / \Z  \right) \right). \label{eqn:Phi-step3}
\end{align}

For $t\in U_b^2$, let us define
\begin{equation}\label{eqn:defn-Psi}
\Psi(b,t,s) = \sum_{\gcd(m,b)=1} m^{-s} \cdot \# \left( \bigcup_{\substack{d \mid m\\ d^2 \equiv t \Mod{b}}} \frac{d}{m}\Z/\Z \right).
\end{equation}
Note that for $m$ coprime to $b$, $m^2t^*\in  U_b^2$ runs over all $ U_b^2$ as $t$ varies in $ U_b^2$, where $t^*$ denotes an inverse of $t$ modulo $b$. Thus one may deduce from \eqref{eqn:Phi-step3} that
\begin{equation}\label{eqn:Phi-final}
\Phi(B,b,s) = \left( \sum_{m_b \mid b^{\infty}} m_b^{-s} \sum_{\delta \mid m_b} \delta \right) \sum_{t \in  U_b^2} \Psi(b,m^2t^*,s)
=\left( \sum_{m_b \mid b^{\infty}} m_b^{-s} \sum_{\delta \mid m_b} \delta \right) \sum_{t \in  U_b^2} \Psi(b,t,s).
\end{equation}
Plugging \eqref{eqn:Phi-final} back into \eqref{eqn:slzeta=SumofPhi} and then applying \cref{prop:slzeta-sum-b_part}, we have
\begin{align}
    \slzeta{L}{s} &= \sum_{b \mid B} B^{-s} F(b,s) \left( \sum_{m_b \mid b^{\infty}} m_b^{-s} \sum_{\delta \mid m_b} \delta \right) \sum_{t \in  U_b^2 } \Psi (b,t,s) \nonumber \\
    &=\sum_{b \mid B} (B/b)^{-s} \left( \prod_{p \mid b} \zeta_p(s-1) \right) \sum_{t \in  U_b^2} \Psi(b,t,s). \label{eqn:slzeta=SumofPsi-final}
\end{align}
\subsection{Evaluation of $\Psi(b,t,s)$} 
Hence the computation of $ \slzeta{L}{s}$ boils down to the evaluation of $\Psi(b,t,s)$.

Suppose $\gcd(m,b)=1$ and let $t\in U_b^2$. For an integer $1 \le a \le m$, we have
\begin{align*}
\frac{a}{m}  + \Z \in \left( \bigcup_{\substack{d \mid m\\ d^2 \equiv t \Mod{b}}} \frac{d}{m}\Z/\Z \right) \quad & \Leftrightarrow \quad \exists d \mid m \text{ such that } d^2 \equiv t \Mod{b} \text{ and } d \mid a \\
& \Leftrightarrow \quad \exists d \mid \gcd(a,m) \text{ such that } d^2 \equiv t \Mod{b}.
\end{align*}
For $g,b\in\n$ and $t \in U_b^2$, define
\begin{equation}\label{eqn:defn-X}
X(b,t,g) = \begin{cases} 1 & \text{if } \exists d \mid g \text{ such that } d^2 \equiv t \Mod{b},
\\
0 & \text{otherwise}.
\end{cases}
\end{equation}
We observe that
\[
\frac{a}{m}  + \Z  \in \left( \bigcup_{\substack{d \mid m\\ d^2 \equiv t \Mod{b}}} \frac{d}{m}\Z/\Z \right) \quad \Leftrightarrow \quad X(b,t,\gcd(a,m))=1.
\]

For a given $g \mid m$, since $\#\{1 \le a \le m : \gcd(a,m)=g\}=\varphi(\frac{m}{g})$, where $\varphi(n)$ denotes the Euler's totient function, we have

\[
    \# \left( \bigcup_{\substack{d \mid m\\ d^2 \equiv t \Mod{b}}} \frac{d}{m}\Z/\Z \right) = \sum_{g \mid m} \varphi \left(\frac{m}{g} \right) X(b,t,g).
\]
Plugging this into \eqref{eqn:defn-Psi}, we have 
\begin{align*}
\Psi(b,t,s)&= \sum_{\gcd(m,b)=1} \sum_{g \mid m} \varphi \left(\frac{m}{g} \right) X(b,t,g)m^{-s}\\
&= \sum_{\gcd(g,b)=1} X(b,t,g) \sum_{\substack{\gcd(m,b)=1 \\ g \mid m}} \varphi \left( \frac{m}{g} \right) m^{-s} \\
&= \sum_{\gcd(g,b)=1} X(b,t,g) g^{-s} \frac{\zeta(s-1)}{\zeta(s)}\prod_{p \mid b} \frac{\zeta_p(s)}{\zeta_p(s-1)},
\end{align*}
where we use the identity $\sum_{n=1}^\infty \varphi(n)n^{-s}=\frac{\zeta(s-1)}{\zeta(s)}$ in the last line.
Thus noting that $X(b,t,g)=X(b,t,gf)$ for any $\gcd(g,n)=1$ and $f\mid b^\infty$, we have
\begin{align}
 \left( \prod_{p \mid b} \zeta_p(s-1) \right) \Psi(b,t,s) =& \frac{\zeta(s-1)}{\zeta(s)} \sum_{\gcd(g,b)=1} X(b,t,g) g^{-s}  \prod_{p \mid b } \zeta_p(s) \nonumber \\
=& \frac{\zeta(s-1)}{\zeta(s)} \sum_{\gcd(g,b)=1} X(b,t,g) g^{-s} \sum_{f \mid b^{\infty}}f^{-s} \nonumber \\
=&\frac{\zeta(s-1)}{\zeta(s)} \sum_{\gcd(g,b)=1}\sum_{f \mid b^{\infty}} X(b,t,gf) (gf)^{-s} = \frac{\zeta(s-1)}{\zeta(s)} \sum_{m=1}^{\infty} \frac{X(b,t,m)}{m^s}. \label{eqn:Psi-into-SumofX}
\end{align}
Since $X(b,t,m)=1$ if and only if $t \in \{d^2 \Mod {b} : d \mid m, \gcd(d,b)=1 \}$, we have
\begin{equation}\label{eqn:SumofX-over-all-t}
\sum_{t \in  U_b^2} \sum_{m=1}^{\infty} \frac{X(b,t,m)}{m^s}  = \sum_{m=1}^{\infty} \frac{\# \{d^2 \Mod{b} : d \mid m, \gcd(d,b)=1\}}{m^s}.
\end{equation}
Summing \eqref{eqn:Psi-into-SumofX} over all $t\in U_b^2$ and then applying \eqref{eqn:SumofX-over-all-t}, we obtain
\[
\left( \prod_{p \mid b} \zeta_p(s-1) \right) \sum_{t \in U_b^2}\Psi(b,t,s)=\frac{\zeta(s-1)}{\zeta(s)} \sum_{m=1}^{\infty}  \frac{ \# \{d^2 \Mod {b} : d \mid m,\, \gcd(d,b)=1 \} }{m^s}.
\]
Plugging this back into \eqref{eqn:slzeta=SumofPsi-final}, we prove \cref{thm:MainAnswer}. 

\section{Technical details}\label{sec:lemmas}
In this section, we prove technical details that allowed us to compute $\slzeta{L}{s}$ for sublattices of $L\subseteq\mathbb{H}$.

\begin{prop} \label{prop:intersection.cond1}
Let $d_1,\ldots,d_k$ be distinct positive divisors of $m$, $d = \gcd(\bm{d})$, and let $M(B;\bm{d})$ be as defined in \eqref{eqn:defn-M}. Then 
\[
\bigcap_{i=1}^{k} S_{m,d_i, A/B} \neq \emptyset \quad \text{ if and only if } \quad  M(B;\bm{d}) \mid m.
\]
\end{prop}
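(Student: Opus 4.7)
The plan is to reduce the non-emptyness of the $k$-fold intersection to a family of pairwise conditions via a Chinese remainder argument, and then translate those pairwise conditions into the single divisibility $M(B;\bm d)\mid m$ by a prime-by-prime analysis. Set $x_i:=\tfrac{Am}{Bd_i^2}$, so $S_{m,d_i,A/B}=(x_i+\tfrac{1}{d_i}\Z)/\Z$. All of these cosets lie inside the cyclic group $\tfrac{1}{N}\Z/\Z\cong\Z/N\Z$ for $N:=B\cdot\mathrm{lcm}(\bm d)^2$, and under this isomorphism they become residue conditions modulo the various $N/d_i$. A standard CRT argument shows such a family of residue conditions is simultaneously solvable iff every pair is; together with \cref{lem:coset.intersection}, this reduces the proposition to the pairwise condition
\[
B\,d_i d_j\,\gcd(d_i,d_j)\ \bigm|\ A\,m\,(d_j^2-d_i^2)\qquad(1\le i<j\le k).
\]

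Since $\gcd(A,B)=1$, this divisibility splits into $p$-adic inequalities. At a prime $p\nmid B$, a short case split on whether $v_p(d_i)=v_p(d_j)$ (using $v_p(d_j^2-d_i^2)=2\min(v_p(d_i),v_p(d_j))$ in the unequal case) shows the inequality is automatic from $d_i,d_j\mid m$. At a prime $p\mid B$, it becomes
\[
v_p(m)\ \ge\ v_p\!\left(\tfrac{B\,d_id_j\,\gcd(d_i,d_j)}{d_j^2-d_i^2}\right)\qquad(1\le i<j\le k).
\]

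Meanwhile, since $r\Z\cap\Z=(\text{numerator of }r\text{ in lowest terms})\cdot\Z$ for $r\in\Q_{>0}$, the defining identity \eqref{eqn:defn-M} yields
\[
v_p(M(B;\bm d))\ =\ \max\!\Bigl(\max_k v_p(d_k),\ \max_{i<j}v_p\!\bigl(\tfrac{B\,\gcd(\bm d)\,d_id_j}{d_j^2-d_i^2}\bigr)\Bigr).
\]
At $p\nmid B$, a direct check gives $v_p\bigl(\tfrac{B\gcd(\bm d)d_id_j}{d_j^2-d_i^2}\bigr)\le\max_k v_p(d_k)$, so $v_p(M)$ simplifies to $\max_k v_p(d_k)$ and the condition $v_p(M)\le v_p(m)$ is nothing more than the condition $d_i\mid m$ at $p$, matching the previous paragraph.

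The main obstacle will be the matching at primes $p\mid B$, where I must verify
\[
\max_{i<j}v_p\!\bigl(\tfrac{B\,d_id_j\,\gcd(d_i,d_j)}{d_j^2-d_i^2}\bigr)\ =\ \max_{i<j}v_p\!\bigl(\tfrac{B\,\gcd(\bm d)\,d_id_j}{d_j^2-d_i^2}\bigr).
\]
The direction ``$\ge$'' is immediate from $\gcd(d_i,d_j)\ge\gcd(\bm d)$. For ``$\le$'', set $a_k:=v_p(d_k)$ and split on whether the $a_k$ all agree. If they do not, pick $(i_0,j_0)$ with $a_{i_0}=\min_k a_k$ and $a_{j_0}=\max_k a_k$, for which $v_p(d_{j_0}^2-d_{i_0}^2)=2a_{i_0}$, and a direct computation shows both maxima equal $v_p(B)+\max_k a_k$; if instead every $a_k=a$, both maxima directly collapse to $v_p(B)+a-\min_{i<j}v_p\bigl((d_i/p^a)^2-(d_j/p^a)^2\bigr)$. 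Combining this matching with the reductions above yields the desired equivalence.
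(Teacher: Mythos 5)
Your proposal is correct, but it reaches the result by a genuinely different route than the paper. You pass to the cyclic group $\frac{1}{N}\Z/\Z$ with $N=B\,\lcm(\bm{d})^2$, invoke the pairwise-compatibility (non-coprime) form of CRT to reduce the $k$-fold intersection to the pairwise conditions $B\,d_id_j\gcd(d_i,d_j)\mid Am(d_j^2-d_i^2)$, and then match these with $M(B;\bm{d})\mid m$ prime by prime; I checked your case analysis (at $p\nmid B$ the condition follows from $d_i,d_j\mid m$ via $\nu_p(d_j^2-d_i^2)=2\min(\nu_p(d_i),\nu_p(d_j))$ in the unequal case, and at $p\mid B$ your max-matching between the pairwise-gcd and global-gcd expressions, using the extremal pair $(i_0,j_0)$, is right) and it goes through. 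The paper instead handles the intersection directly: for ``only if'' it takes $x$ in the intersection and forms the integer combination $\frac{d_j}{d}\bigl(d_ix-\frac{A}{B}\frac{m}{d_i}\bigr)-\frac{d_i}{d}\bigl(d_jx-\frac{A}{B}\frac{m}{d_j}\bigr)$ to conclude $\frac{A}{B}\frac{m}{d}\bigl(\frac{d_i}{d_j}-\frac{d_j}{d_i}\bigr)\in\Z$, and for ``if'' it exhibits the explicit common point $x=\frac{A}{B}\sum_j\frac{m}{d}\frac{\alpha_j}{d_j}$ with $\sum_j\alpha_jd_j=d$. That formulation produces the pairwise criterion already in terms of the global gcd $d$, exactly as it appears in \eqref{eqn:defn-M}, so no CRT and no valuation bookkeeping is needed; the valuation analysis you carry out essentially re-derives the content of \cref{pro:M.valuation}, which the paper proves separately for later use. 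Your approach buys a mechanical, purely local verification (and \cref{pro:M.valuation} as a by-product), at the cost of the CRT criterion and the extra step replacing $\gcd(d_i,d_j)$ by $\gcd(\bm{d})$ inside the maximum; when writing it up, also state explicitly that $\max_k\nu_p(d_k)\le\nu_p(m)$ holds automatically (since each $d_k\mid m$), which is the small observation needed to absorb the first term of your formula for $\nu_p(M(B;\bm{d}))$ at primes $p\mid B$ as well.
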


\begin{proof}
Recalling the definition of $M(B;\bm{d})$, it suffices to show that
\[
\bigcap_{i=1}^{k} S_{m,d_i, A/B}  \neq \emptyset \quad \text{if and only if} \quad  \frac{1}{B} \frac{m}{d} \left( \frac{d_i}{d_j} - \frac{d_j}{d_i} \right) \in \Z \text{ for any }1 \le i< j \le k.
\]
To prove the only if part, let $x\in\q$ be such that $x+\z \in \bigcap_{i=1}^{k} S_{m,d_i, A/B}$. Then for each $i$, we have
\[
x \in \frac{A}{B} \frac{m/d_i}{d_i} + \frac{1}{d_i} \Z \quad \Rightarrow \quad d_i x \in \frac{A}{B} \frac{m}{d_i} + \Z \quad \Rightarrow \quad d_i x - \frac{A}{B}  \frac{m}{d_i} \in \Z.
\]
Hence for any $1\le i < j \le k$, we have
\[
\frac{d_j}{d} \left(d_i x -  \frac{A}{B} \frac{m}{d_i} \right) -\frac{d_i}{d} \left(d_j x -  \frac{A}{B} \frac{m}{d_j} \right) = \frac{A}{B} \frac{m}{d}  \left(\frac{d_i}{d_j} - \frac{d_j}{d_i} \right) \in \Z.
\]
Meanwhile we always have
\[
\frac{m}{d}\left(\frac{d_i}{d_j} - \frac{d_j}{d_i} \right) = \frac{m}{d_j} \frac{d_i}{d} - \frac{m}{d_i} \frac{d_j}{d} \in \Z.
\]
Hence
\[
\frac{m}{d}\left(\frac{d_i}{d_j} - \frac{d_j}{d_i} \right) \in \Z \cap \frac{B}{A} \Z = B\Z,
\]
where the last expression follows since $\gcd(A,B)=1$.

To prove the if part, let $\alpha_i \in \Z$ be integers satisfying $\sum_{j=1}^{k} \alpha_j d_j = d$ and define
\[
x = \frac{A}{B} \sum_{j=1}^{k} \frac{m}{d} \frac{\alpha_j}{d_j}.
\]
Then for each $i$ we have $x \in \frac{A}{B}\frac{m/d_i}{d_i}+ d_i^{-1} \Z =S_{m,d_i, A/B}$ as
\begin{align*}
d_i x =  \frac{A}{B} \sum_{j=1}^{k}  \frac{m}{d} \frac{d_i}{d_j} \alpha_j &= \frac{A}{B} \sum_{j=1}^{k} \frac{m}{d} \frac{d_j}{d_i} \alpha_j + A \sum_{j=1}^{k} \frac{1}{B}\frac{m}{d}\left( \frac{d_i}{d_j}- \frac{d_j}{d_i}\right) \alpha_j \\
&= \frac{A}{B} \frac{m}{d_i}+ A \sum_{j=1}^{k} \frac{1}{B}\frac{m}{d}\left( \frac{d_i}{d_j}- \frac{d_j}{d_i}\right) \alpha_j  \in \frac{A}{B}\frac{m}{d_i} + \Z. \qedhere
\end{align*}
\end{proof}

The number $M(B;\bm{d})$ can be calculated as follows.

\begin{prop} \label{pro:M.valuation}
Let $p$ be a prime and let $d=\gcd(\bm{d})$. Then $\nu_p(M(B;\bm{d}) )$ is determined as follows:
\begin{enumerate}[label={\rm (\arabic*)}]
    \item If $\nu_p(d_i)$ are not all the same, then
    \[
    \nu_p(M(B;\bm{d})) =  \max_{1 \le i \le k} \nu_p(d_i)+\nu_p(B).
    \]
    \item If $\nu_p(d_i)$ are all the same, then 
    \[
    \nu_p(M(B;\bm{d})) = \max_{1 \le i \le k} \nu_p(d_i)+\nu_p(B) -\min \left(\nu_p(B), \min_{1 \le i<j\le k} \nu_p \left(\frac{d_i^2}{d^2}-\frac{d_j^2}{d^2} \right) \right).
    \]
\end{enumerate}

\end{prop}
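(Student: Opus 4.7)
The plan is to compute $\nu_p(M(B;\bm{d}))$ directly from the defining intersection \eqref{eqn:defn-M}. Since $d_1\Z \subseteq \Z$ is one of the intersecting ideals, the whole intersection sits inside $\Z$, so $M(B;\bm{d})$ is a well-defined positive integer. For any $r \in \Q^{\times}$, the containment $x \in r\Z$ amounts to $\nu_q(x) \ge \nu_q(r)$ for every prime $q$, so extracting the $p$-valuation of the intersection yields
\[
\nu_p(M(B;\bm{d})) = \max\!\left\{\,\max_{1 \le i \le k} \nu_p(d_i),\ \max_{1 \le i<j \le k} \nu_p\!\left(\frac{B\,d}{\,d_i/d_j - d_j/d_i\,}\right)\right\}.
\]

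Writing $\beta := \nu_p(B)$, $e_i := \nu_p(d_i)$, and $e := \nu_p(d) = \min_i e_i$, the identity $d_i/d_j - d_j/d_i = (d_i^2 - d_j^2)/(d_i d_j)$ turns each inner expression into $\beta + e + e_i + e_j - \nu_p(d_i^2 - d_j^2)$. For statement (1), fix a pair with $e_i < e_j$; then $\nu_p(d_i^2) \ne \nu_p(d_j^2)$ forces $\nu_p(d_i^2 - d_j^2) = 2 e_i$, and the pair contributes $\beta + e + e_j - e_i$, which is maximized at $\beta + \max_i e_i$ by taking $e_i = e$ and $e_j = \max_i e_i$. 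Pairs with $e_i = e_j$ only yield $\nu_p(d_i^2 - d_j^2) \ge 2 e_i$ and hence contribute at most $\beta + e$, which is dominated since $e < \max_i e_i$ under the hypothesis of case (1); and $\max_i \nu_p(d_i)$ is clearly dominated as well, giving the desired formula. For statement (2), all $e_i$ equal $e$, and writing $d_i^2 - d_j^2 = d^2\bigl(d_i^2/d^2 - d_j^2/d^2\bigr)$ makes each pair's contribution $\beta + e - \nu_p(d_i^2/d^2 - d_j^2/d^2)$. Setting $m_0 := \min_{i<j} \nu_p(d_i^2/d^2 - d_j^2/d^2) \ge 0$, the maximum over pairs is $\beta + e - m_0$; combining with $\max_i \nu_p(d_i) = e$ produces $e + \beta - \min(\beta, m_0)$, as claimed.

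The whole argument is a systematic bookkeeping of $p$-adic valuations on an intersection of fractional ideals. The main subtlety is the two-case split: in case (1), the sharp equality $\nu_p(d_i^2 - d_j^2) = 2\min(e_i, e_j)$ is available precisely because some valuations differ, while in case (2) this sharpness is lost and the residual valuation $\nu_p(d_i^2/d^2 - d_j^2/d^2)$ must be tracked individually. Verifying that equal-valuation pairs never dominate in case (1) — which is the one place the hypothesis ``$e_i$ are not all the same'' is really used — is the principal obstacle.
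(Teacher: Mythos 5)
Your proof is correct and follows essentially the same route as the paper: express $\nu_p(M(B;\bm{d}))$ as the maximum of the valuations of the intersected ideals, rewrite $d_i/d_j - d_j/d_i$ as $(d_i^2-d_j^2)/(d_id_j)$, and split into the two cases according to whether the $\nu_p(d_i)$ coincide. The only cosmetic difference is that in case (1) you bound equal-valuation and distinct-valuation pairs separately, whereas the paper bounds all pairs at once using the indices realizing the minimal and maximal valuation; the content is the same.
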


\begin{proof}
We first note that
\[
\nu_p(M(B;\bm{d}) ) = \max\left( \max_{1 \le i \le k} \nu_p(d_i), \max_{1 \le i < j \le k} \nu_p\left(Bd \left(\frac{d_i}{d_j} - \frac{d_j}{d_i} \right)^{-1} \right) \right).
\]
(1) Let $a,b\in\z$ with $1 \le a, b \le k$ be indices such that $\nu_p(d_a) = \displaystyle\min_{1 \le i \le k} \nu_p(d_i)$ and $\nu_p(d_b) =\displaystyle\max_{1 \le i \le k} \nu_p(d_i)$. Since
$\nu_p\left(\frac{d_i}{d_j}\right) \ge \nu_p(d_a)-\nu_p(d_b)$ for any $i ,j$, we have $\nu_p\left(\frac{d_i}{d_j}-\frac{d_j}{d_i}\right)\ge \nu_p(d_a)-\nu_p(d_b)$. Thus
\begin{equation} \label{eq:temp1}
\nu_p\left( B d \left( \frac{d_i}{d_j}- \frac{d_j}{d_i} \right)^{-1} \right) \le \nu_p(B) + \nu_p(d) + \nu_p(d_b) - \nu_p (d_a) = \nu_p (B) + \max_{1 \le i \le k} \nu_p(d_i)
\end{equation}
as $\nu_p(d) = \nu_p(d_a)$. Meanwhile, when $(i,j)=(a,b)$, one may check that
\[
\nu_p\left(\frac{d_a}{d_b}-\frac{d_b}{d_a}\right) = \nu_p\left(\frac{d_a}{d_b}\right) =  \nu_p(d_a ) - \nu_p (d_b).
\]
Hence the equality is achieved in \eqref{eq:temp1}, and we have
\[
\max_{1 \le i < j \le k} \nu_p\left( B d \left( \frac{d_i}{d_j}- \frac{d_j}{d_i} \right)^{-1} \right) = \nu_p(B) + \max_{1 \le i \le k} \nu_p(d_i).
\]
Thus
\[
\nu_p(M(B; \bm{d})) = \max\left(\max_{1 \le i \le k} \nu_p(d_i),\nu_p(B) + \max_{1 \le i \le k} \nu_p(d_i) \right) = \nu_p(B) + \max_{1 \le i \le k} \nu_p(d_i).
\]
(2) When $\nu_p(d_i)=\nu_p(d)$ for all $i$, we have
\[
\nu_p\left( \frac{d_i}{d_j} - \frac{d_j}{d_i} \right) = \nu_p(d_i^2 - d_j^2 ) - \nu_p(d_i d_j) = \nu_p(d_i^2 - d_j^2) - \nu_p(d^2) = \nu_p \left( \frac{d_i^2}{d^2} - \frac{d_j^2}{d^2} \right).
\]
Hence
\[
\max_{1 \le i < j \le k} \nu_p \left( Bd \left(\frac{d_i}{d_j} - \frac{d_j}{d_i} \right)^{-1} \right) = \nu_p(B) + \nu_p(d) - \min_{1 \le i < j \le k} \nu_p \left( \frac{d_i^2}{d^2} - \frac{d_j^2}{d^2} \right).
\]
Since $\displaystyle\max_{1 \le i \le k} \nu_p(d_i) = \nu_p(d)$, we have
\begin{align*}
\nu_p(M(B;\bm{d})) &= \max\left(\nu_p(d) , \nu_p(d) + \nu_p(B) - \min_{1 \le i < j \le k} \nu_p \left( \frac{d_i^2}{d^2} - \frac{d_j^2}{d^2} \right) \right) \\&= \nu_p(d) + \nu_p(B) + \max \left(-\nu_p(B),  -\min_{1 \le i < j \le k} \nu_p \left( \frac{d_i^2}{d^2} - \frac{d_j^2}{d^2} \right) \right)\\
&= \max_{1 \le i \le k} \nu_p(d_i) + \nu_p(B) - \min\left(\nu_p(B),  \min_{1 \le i < j \le k} \nu_p \left( \frac{d_i^2}{d^2} - \frac{d_j^2}{d^2} \right) \right).\qedhere
\end{align*}
\end{proof}

\begin{prop}\label{pro:intersection.cond2}
 For $m,B\in \n$, let $d_1,\ldots,d_k$ be distinct divisors of $m$, $C(B;\bm{d})$ be as defined in \eqref{eqn:defn-C}, and $d = \gcd(\bm{d})$. Let $b$ be a divisor of $B$. For $\delta \mid \gcd(m,b^{\infty})$, $t \in U_b^2$, let
\[
\mathcal{D}_b(m,\delta,t) = \{ g \mid m : \delta \mid g, (g/\delta)^2 \equiv t \Mod{b} \},
\] 
 as defined in \eqref{eqn:defn-mathcal-D}. Then the following conditions are equivalent:
\begin{enumerate}[label={\rm (\arabic*)}]
    \item $b \mid C(B;\bm{d})$.
    \item There exists $t \in U_b^2$ such that $\frac{d_i^2}{d^2} \equiv t \Mod{b}$ for all $ 1 \le i \le k$.
    \item $d_1,\ldots,d_k$ are all in the same $\mathcal{D}_b(m,\delta,t)$.
\end{enumerate}
\end{prop}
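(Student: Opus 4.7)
The implications will be proved as $(1)\Leftrightarrow(2)\Leftrightarrow(3)$, with the first equivalence relying on the $p$-adic valuation formula in \cref{pro:M.valuation} and the second being essentially a bookkeeping reformulation.

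\textbf{Step 1: Computing $\nu_p(C(B;\bm{d}))$ for $p\mid b$.} Since $C(B;\bm{d})=\frac{B\cdot\mathrm{lcm}(\bm{d})}{M(B;\bm{d})}$ and $\nu_p(\mathrm{lcm}(\bm{d}))=\max_i \nu_p(d_i)$, \cref{pro:M.valuation} yields
\[
\nu_p(C(B;\bm{d})) = \begin{cases} 0 & \text{if the }\nu_p(d_i)\text{ are not all equal},\\
\min\!\left(\nu_p(B),\ \displaystyle\min_{i<j}\nu_p\!\left(\tfrac{d_i^2}{d^2}-\tfrac{d_j^2}{d^2}\right)\right) & \text{if the }\nu_p(d_i)\text{ are all equal}.
\end{cases}
\]

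\textbf{Step 2: $(1)\Leftrightarrow(2)$.} Condition (1) is equivalent to $\nu_p(b)\le\nu_p(C(B;\bm{d}))$ for every prime $p\mid b$. By Step 1, since $\nu_p(b)\ge 1$, the first case is excluded: (1) forces $\nu_p(d_i)=\nu_p(d)$ for all $i$ and all $p\mid b$, i.e.\ every $d_i/d$ is coprime to $b$. Using $b\mid B$, condition (1) then amounts to
\[
\nu_p\!\left(\tfrac{d_i^2}{d^2}-\tfrac{d_j^2}{d^2}\right)\ge \nu_p(b) \quad\text{for all }i<j\text{ and all }p\mid b,
\]
which by CRT is the single congruence $\frac{d_i^2}{d^2}\equiv\frac{d_j^2}{d^2}\pmod b$ for all $i,j$. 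Setting $t=(d_1/d)^2\bmod b$, which lies in $U_b^2$ since $d_1/d\in U_b$, gives (2). Conversely, (2) implicitly requires $d_i/d\in U_b$ (else $d_i^2/d^2$ cannot be a unit square), which forces $\nu_p(d_i)=\nu_p(d)$ for all $i$ and $p\mid b$; the displayed congruence then reverses the computation to give (1).

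\textbf{Step 3: $(2)\Leftrightarrow(3)$.} Assume (2) and let $\delta=\gcd(d,b^{\infty})$; Step 2 shows $\gcd(d_i,b^{\infty})=\delta$ for every $i$, so $\delta\mid d_i$ and $d_i/\delta$ is coprime to $b$. Writing $d_i/d=(d_i/\delta)\cdot(\delta/d)^{-1}$ inside $U_b$ and multiplying the congruence $(d_i/d)^2\equiv t\pmod b$ by $(d/\delta)^2$, we obtain $(d_i/\delta)^2\equiv t(d/\delta)^2\pmod b$ for all $i$; hence every $d_i$ lies in $\mathcal{D}_b(m,\delta,t(d/\delta)^2)$, giving (3). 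Conversely, if all $d_i\in \mathcal{D}_b(m,\delta,t')$, then $\delta\mid d_i$ with $d_i/\delta$ coprime to $b$ forces $\gcd(d_i,b^{\infty})=\delta$ uniformly, so $\delta$ is the $b$-part of $d$, and multiplying $(d_i/\delta)^2\equiv t'\pmod b$ by $(\delta/d)^2$ recovers the uniform value $(d_i/d)^2\equiv t'(d/\delta)^{-2}\in U_b^2$, establishing (2).

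\textbf{Anticipated difficulty.} The main obstacle is the careful case analysis in Step 2: one must extract from \cref{pro:M.valuation} the correct dichotomy at each prime $p\mid b$ and check that the ``not all equal'' case is automatically incompatible with $b\mid C(B;\bm{d})$, so that the hypothesis quietly promotes itself to the ``all equal'' case where the usable congruence lives. Once this is in place, Step 3 is merely the translation between the quotient $d_i/d$ (natural for the valuation computation) and the quotient $d_i/\delta$ (natural for the definition of $\mathcal{D}_b$).
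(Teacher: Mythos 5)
Your proposal is correct and follows essentially the same route as the paper: both hinge on the per-prime formula for $\nu_p(C(B;\bm{d}))$ extracted from \cref{pro:M.valuation}, the observation that $t\in U_b^2$ forces $\gcd(d_i/d,b)=1$ (so the ``not all equal'' case is automatically excluded), and the same translation between $d_i/d$ and $d_i/\delta$ via $\delta=\gcd(d,b^{\infty})$. The only difference is organizational --- you prove the two biconditionals $(1)\Leftrightarrow(2)$ and $(2)\Leftrightarrow(3)$, whereas the paper runs the cycle $(1)\Rightarrow(2)\Rightarrow(3)\Rightarrow(1)$ --- with identical mathematical content.
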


\begin{proof}
($(1)\Rightarrow(2)$) Recall $C(B;\bm{d}) = \frac{B \cdot \mathrm{lcm}(\bm{d})}{M(B;\bm{d})}$. Suppose that $b \mid C(B;\bm{d})$, that is, $\nu_p(b) \le \nu_p(C(B;\bm{d}))$ for any prime $p$. By Proposition \ref{pro:M.valuation} we have
\[
\nu_p(C(B;\bm{d})) = \begin{cases}
    \min \left(\nu_p(B), \displaystyle\min_{1 \le i  < j \le k} \nu_p \left(\frac{d_i^2}{d^2}-\frac{d_j^2}{d^2} \right) \right) & \text{if } \nu_p(B)>0 \text{ and } \nu_p(d_1) = \cdots = \nu_p(d_k),\\
    0 & \text{otherwise}.
\end{cases}
\]
Hence if $\nu_p(b)>0$, then $\nu_p(C(B;\bm{d}))>0$, and hence $\nu_p(d_i)=\nu_p(d)$ for all $i$ and for all $1 \le i < j \le k$,
\[
\nu_p\left( \dfrac{d_i^2}{d^2} - \dfrac{d_j^2}{d^2} \right) \ge \nu_p(b).
\]
Thus, we have $\gcd\left( d_i/d, b \right) = 1$ for all $i$ and $b \mid \frac{d_i^2}{d^2} - \frac{d_j^2}{d^2}$ for all $1 \le i < j \le k$. 

($(2)\Rightarrow(3)$) Suppose there exists $t \in U_b^2$ such that $(d_i/d)^2 \equiv t \Mod{b}$. As $\gcd(d_i/d,b)=1$, we have $\gcd(d_i, b^{\infty}) = \gcd(d, b^{\infty})$. Letting $\delta = \gcd(d, b^{\infty})$, we have $\delta \mid d$ and
\[
\frac{d_i^2}{\delta^2} \equiv \frac{d_i^2}{d^2} \frac{\delta^2}{d^2} \equiv t \frac{\delta^2}{d^2} \Mod{b}.
\]
Thus $d_i \in \mathcal{D}_b(m,\delta,t(\delta/d)^2)$ for all $i$.

($(3)\Rightarrow(1)$) Suppose $d_1, \ldots, d_k \in \mathcal{D}_b(m,\delta,t)$. Since $\gcd(d_i/\delta, b)=1$, for any prime $p\mid b$, we have $\nu_p(d_i) = \nu_p(\delta)=\nu_p(d)$ for all $i$. For any $1 \le i < j \le k$, since $b$ divides $(d_i/\delta)^2 - (d_j/\delta)^2$, we have
\[
\nu_p(d_i^2 - d_j^2) \ge \nu_p(\delta^2) + \nu_p(b) = \nu_p(d^2)+\nu_p(b).
\]
Thus by Proposition \ref{pro:M.valuation} and since $b \mid B$, we have for any prime $p \mid b$ that
\[
\nu_p(C(B;{\bm d})) = \min\left( \nu_p(B), \min_{1 \le i < j \le k} \nu_p \left( \frac{d_i^2}{d^2} - \frac{d_j^2}{d^2} \right) \right) \ge \min(\nu_p(B), \nu_p(b) ) \ge \nu_p(b). \qedhere
\]
\end{proof}

\begin{prop} \label{pro:unioncount.delta}
Let $m,b\in\n$, $t\in U_b^2$, and write $m=m_bm_{co}$ with $m_b\mid b^\infty$ and $\gcd(m_{co},b)=1$. Then 
\[
\# \left( \bigcup_{d \in \mathcal{D}_b(m,\delta,t)} d^{-1} \Z / \Z \right) = \delta \cdot \# \left( \bigcup_{\substack{d \mid m_{co}\\ d^2 \equiv t \Mod{b}}} d^{-1} \Z / \Z  \right).
\]
\end{prop}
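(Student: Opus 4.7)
The plan is to reduce the identity to two structural observations: a divisor-level bijection that trades $\mathcal{D}_b(m,\delta,t)$ for a simpler indexing set, and a primary-component decomposition inside $\Q/\Z$ that factors the resulting union.

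First, I would establish a bijection $\mathcal{D}_b(m,\delta,t) \;\xrightarrow{\sim}\; S := \{e \mid m_{co} : e^2 \equiv t \Mod{b}\}$ given by $d \mapsto d/\delta$, with inverse $e \mapsto \delta e$. The key point is that $(d/\delta)^2 \equiv t \Mod{b}$ with $t \in U_b^2$ forces $\gcd(d/\delta, b) = 1$; combined with $d \mid m = m_b m_{co}$, $\delta \mid m_b$, and $m_b \mid b^\infty$, this pins $d/\delta$ down to a divisor of the $b$-coprime part $m_{co}$. Conversely any $e \in S$ produces $\delta e \in \mathcal{D}_b(m,\delta,t)$.

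Next, I would exploit the primary decomposition in $\Q/\Z = \bigoplus_p \Z[1/p]/\Z$. Whenever $\delta \mid b^\infty$ and $\gcd(e,b)=1$, the subgroup $\delta^{-1}\Z/\Z$ is supported at primes $p \mid b$ while $e^{-1}\Z/\Z$ is supported at primes $p \nmid b$, so
\[
\frac{1}{\delta e}\Z/\Z \;=\; \frac{1}{\delta}\Z/\Z \;\oplus\; \frac{1}{e}\Z/\Z.
\]
Applying this with $d = \delta e$ and taking the union over $e \in S$ rewrites the left-hand side of the proposition as the sumset
\[
\bigcup_{d \in \mathcal{D}_b(m,\delta,t)} \frac{1}{d}\Z/\Z \;=\; \frac{1}{\delta}\Z/\Z \;+\; \bigcup_{e \in S} \frac{1}{e}\Z/\Z \quad \subseteq \Q/\Z.
\]

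Finally, because the fixed subgroup $\delta^{-1}\Z/\Z$ lives entirely in the $b$-primary part of $\Q/\Z$ while every element of $\bigcup_{e \in S} e^{-1}\Z/\Z$ lives in the $b$-coprime part, the addition map $(\delta^{-1}\Z/\Z)\times\bigcup_{e \in S} e^{-1}\Z/\Z \to \Q/\Z$ is injective. Therefore the cardinality of the sumset factors as $\delta \cdot \#\bigcup_{e \in S} e^{-1}\Z/\Z$, which is precisely the asserted identity. The only step demanding attention is the divisor bijection—verifying that $d/\delta$ is automatically coprime to $b$ and hence forced into $m_{co}$; once this is in place the direct-sum splitting is essentially immediate, so I do not foresee any substantive obstacle.
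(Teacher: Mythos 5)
Your proof is correct. The first half coincides with the paper's argument: both identify $\mathcal{D}_b(m,\delta,t)$ with $\{\delta e : e \mid m_{co},\ e^2 \equiv t \Mod{b}\}$, using exactly the observation you flag (that $t\in U_b^2$ forces $\gcd(d/\delta,b)=1$, which together with $d\mid m$ pins $d/\delta$ to a divisor of $m_{co}$). The counting step is where you diverge: the paper simply observes that $x+\Z \mapsto \delta x+\Z$ is a $\delta$-to-$1$ surjection from $\bigcup_e (\delta e)^{-1}\Z/\Z$ onto $\bigcup_e e^{-1}\Z/\Z$, whereas you invoke the primary decomposition of $\Q/\Z$ to split $\frac{1}{\delta e}\Z/\Z = \frac{1}{\delta}\Z/\Z \oplus \frac{1}{e}\Z/\Z$, rewrite the union as the sumset $\frac{1}{\delta}\Z/\Z + \bigcup_{e} \frac{1}{e}\Z/\Z$, and get injectivity of the addition map from disjointness of supports. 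Both mechanisms are sound; the multiplication-by-$\delta$ map is slightly leaner in that its $\delta$-to-$1$ surjectivity needs no coprimality at all (the kernel coset $x+\frac{1}{\delta}\Z/\Z$ automatically sits inside $\frac{1}{\delta e}\Z/\Z$), while your CRT splitting uses $\gcd(\delta,e)=1$ a second time but makes the factorization of the count structurally transparent. One small bookkeeping point: your argument uses $\delta \mid m_b$ and $\delta \mid b^\infty$, which is the standing hypothesis $\delta \mid \gcd(m,b^\infty)$ from the definition of $\mathcal{D}_b(m,\delta,t)$; it is worth stating explicitly since the proposition as phrased leaves it implicit.
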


\begin{proof}
We show that
\[
\mathcal{D}_b(m,\delta ,t)  = \{ \delta d : d \mid m_{co}, d^2 \equiv t \Mod{b} \}.
\]
Suppose $u \in \mathcal{D}_b(m,\delta, t)$. Then $\delta \mid u$ and $(u/\delta)^2 \equiv t \Mod{b}$. Denote $d = u/\delta$ then $d^2 \equiv t \Mod{b}$. As $d \mid m$ and $\gcd(d,b)=1$, it follows that $d \mid m_{co}$. Conversely if $d \mid m_{co}$ and $d^2 \equiv t \Mod{b}$, then $\delta d \in \mathcal{D}_b(m,\delta,t)$ is obvious. We note that
\[
\kappa : 
\bigcup_{\substack{d \mid m_{co}\\ d^2 \equiv t \Mod{b}}} (\delta d)^{-1}  \Z / \Z \rightarrow \bigcup_{\substack{d \mid m_{co}\\ d^2 \equiv t \Mod{b}}} d^{-1} \Z / \Z  , \quad  \kappa(x +\Z ) =  \delta x +\Z
\]
is a $\delta$-to-$1$ surjective map. Thus
\[
\# \left( \bigcup_{d \in \mathcal{D}_b(m,\delta,t)} d^{-1} \Z / \Z \right) =
\# \left( \bigcup_{\substack{d \mid m_{co}\\ d^2 \equiv t \Mod{b}}} (\delta d)^{-1} \Z / \Z  \right)=
\delta \cdot \# \left( \bigcup_{\substack{d \mid m_{co}\\ d^2 \equiv t \Mod{b}}} d^{-1} \Z / \Z  \right). \qedhere
\]
\end{proof}

\begin{prop}\label{prop:slzeta-sum-b_part}
For $b\in\n$, let $F(b,s)$ be as defined in \eqref{eqn:Ns-Mobius}. We have
\[
F(b,s) \left( \sum_{n \mid b^{\infty}} n^{-s} \sum_{\delta \mid n} \delta \right)  = b^{s} \cdot \prod_{p \mid b} \zeta_p(s-1).
\]
\end{prop}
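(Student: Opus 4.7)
The plan is to verify the identity by factoring both sides as products of local factors indexed by primes dividing $b$ and then checking the equality prime by prime.

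First, I would observe that the function $b \mapsto F(b,s) = \sum_{d \mid b} d^s \mu(b/d)$ is multiplicative in $b$, being a Dirichlet convolution of multiplicative functions. A direct computation at a prime power $b = p^k$ with $k \geq 1$ gives $F(p^k,s) = p^{ks} - p^{(k-1)s} = p^{ks}(1-p^{-s})$, whence
\[
F(b,s) = b^s \prod_{p \mid b}(1-p^{-s}).
\]

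Next, I would rewrite the inner sum using $\sum_{\delta \mid n} \delta = \sigma_1(n)$ and note that the condition $n \mid b^{\infty}$ means exactly that $n$ is supported on the prime divisors of $b$. Since $\sigma_1$ is multiplicative, this yields the Euler-type factorization
\[
\sum_{n \mid b^{\infty}} n^{-s} \sigma_1(n) = \prod_{p \mid b} \left( \sum_{a=0}^{\infty} p^{-as} \sigma_1(p^a) \right).
\]
For each prime $p \mid b$, using $\sigma_1(p^a) = (p^{a+1}-1)/(p-1)$ and splitting the geometric series, a short algebraic manipulation yields
\[
\sum_{a=0}^{\infty} p^{-as} \sigma_1(p^a) = \frac{1}{p-1}\left(\frac{p}{1-p^{1-s}} - \frac{1}{1-p^{-s}}\right) = \frac{1}{(1-p^{1-s})(1-p^{-s})} = \zeta_p(s-1)\zeta_p(s).
\]

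Finally, multiplying the two expressions and using the identity $(1-p^{-s})\zeta_p(s) = 1$ for each prime dividing $b$ gives
\[
F(b,s) \sum_{n \mid b^{\infty}} n^{-s} \sigma_1(n) = b^s \prod_{p \mid b}(1-p^{-s}) \cdot \prod_{p \mid b} \zeta_p(s-1)\zeta_p(s) = b^s \prod_{p \mid b} \zeta_p(s-1),
\]
which is the claimed identity. The only step that requires any care is the local geometric series computation in the middle; everything else is routine multiplicativity bookkeeping.
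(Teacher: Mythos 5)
Your proof is correct and follows essentially the same route as the paper: both factor $F(b,s)=b^s\prod_{p\mid b}(1-p^{-s})$, identify $\sum_{n\mid b^\infty}\sigma_1(n)n^{-s}=\prod_{p\mid b}\zeta_p(s)\zeta_p(s-1)$, and cancel the $\zeta_p(s)$ factors. The only cosmetic difference is that you verify the local Euler factor of $\sigma_1$ by a direct geometric-series computation, whereas the paper quotes the identity $\sum_m \sigma_1(m)m^{-s}=\zeta(s)\zeta(s-1)$ and restricts to the primes dividing $b$.
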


\begin{proof}
Let $\sigma_1(m):=\sum_{d\mid m}d$. Then $\sigma_1$ is multiplicative and it is well known that
\[
\sum_{m=1}^{\infty} \frac{\sigma_1(m)}{m^s} = \zeta(s)\zeta(s-1).
\]
Hence 
\[
\sum_{n \mid b^{\infty}} n^{-s} \sum_{\delta \mid n} \delta = \sum_{n \mid b^{\infty}} \frac{\sigma_1(n)}{n^{s}} = \prod_{p\mid b} \zeta_p(s)\zeta_p(s-1).
\]
Meanwhile since
\[
F(b,s) = \sum_{d \mid b} d^{s} \mu\left(\frac{b}{d}\right) = b^s \sum_{d \mid b } \left(\frac{b}{d}\right)^{-s} \mu\left(\frac{b}{d}\right) = b^s \prod_{p \mid b}(1-p^{-s}),
\]
we have
\[
F(b,s)\left( \sum_{n \mid b^{\infty}} n^{-s} \sum_{\delta \mid n} \delta \right) =  b^s \prod_{p \mid b} (1-p^{-s})  \cdot\left( \prod_{p \mid b} \zeta_p(s) \zeta_p(s-1) \right) = b^s \prod_{p\mid b}\zeta_p(s-1). \qedhere
\]
\end{proof}

\section{Proof of Theorem \ref{thm:MainAnswer2}}\label{sec:comb}
In this section, we prove Theorem \ref{thm:MainAnswer2}. 
Recall that from \cref{thm:MainAnswer} and \eqref{eqn:SumofX-over-all-t}, we have
\begin{align*}
    \slzeta{L}{s}&=\frac{\zeta(s-1)}{\zeta(s)}\sum_{b \mid B} (B/b)^{-s} \sum_{m=1}^{\infty}  \frac{ \# \{d^2 \Mod {b} : d \mid m,\, \gcd(d,b)=1 \} }{m^s}\\
    &=\frac{\zeta(s-1)}{\zeta(s)} \sum_{b \mid B} (B/b)^{-s} \sum_{t \in  U_b^2} \sum_{m=1}^{\infty} \frac{X(b,t,m)}{m^s}.
\end{align*}
Since the quantity $X(b,t,m)$ defined in \eqref{eqn:defn-X} concerns the set $\{ d^2 \Mod{b} : d \mid m \}$, the prime divisors of $m$ whose squares are equivalent modulo $b$ seem to play the same role. Hence it is natural to group all primes into squares mod $b$ as
\[
\mathcal{P}_t^{(b)} = \{ p \text{ prime }: p^2 \equiv t \Mod{b} \} \quad \text{and} \quad \mathcal{P}_{0}^{(b)} = \{ p \text{ prime }: p \mid b \}
\]
for each $t \in  U_b^2$ and consider
\[
\Omega(m) = \left( \sum_{p \in \mathcal{P}_t^{(b)}} \nu_p(m)
\right)_{t \in U_b^2 }.
\]
We will show that $X(b,t,m)$ only depends on $\Omega(m)$, allowing us to collect the terms in the series $\sum_m X(b,t,m) m^{-s}$ in terms of the values of $\Omega(m)$. 

To demonstrate the proof, let us define
\[
W(=W(b)):= \{ {\bm w} = (w_{u})_{u \in U_b^2} : w_u \in \Z_{\ge 0} \} \subseteq{\mathbb{R}^{| U_b^2|}}.
\]
For $t \in U_b^2$, define
\[
\mathcal{S}_t := \left\{ {\bm w} \in W : \prod_{u \in U_b^2} u^{w_u} \equiv t \Mod{b} \right\},
\]
and let
\begin{equation*}
    \mathcal{T}_t:=\mathcal{S}_t+W\subseteq W \quad \text{and} \quad \mathcal{T}_t^{c}:=W \setminus \mathcal{T}_t.
\end{equation*}
Moreover, for ${\bm w} \in W$, let us define
\[
c_b({\bm w}) = \# \left\{ t \in  U_b^2 : {\bm w} \notin \mathcal{T}_t \right\}.
\]
%Let us also define
%\[\mathcal{P}_t^{(b)} = \{ p \text{ prime }: p^2 \equiv t \Mod{b} \} \quad \text{and} \quad \mathcal{P}_{0}^{(b)} = \{ p \text{ prime }: p \mid b \}.\]

For an integer $k \ge 0$ and an infinite list $X = (x_1, x_2, \cdots)$, denote by $\mathrm{Sym}_k(X) = \mathrm{Sym}_k(x_1, x_2, \cdots)$ the $k$-th complete homogeneous symmetric polynomial
\[
\mathrm{Sym}_k(x_1, x_2, \cdots ) = \sum_{1 \le i_1 \le \cdots \le i_k} x_{i_1} x_{i_2} \cdots x_{i_k}.
%[y^k]\prod_{i=1}^{\infty} \frac{1}{1-y x_i} =
\]

\begin{prop}
Let $X(b,t,m)$ be as defined in \eqref{eqn:defn-X}. We have
\begin{align}
\sum_{m=1}^{\infty} \frac{X(b,t,m)}{m^s} 
%&= \sum_{{\bm w} \in \mathcal{T}_t} \prod_{p \in \mathcal{P}_0^{(b)}} (1-p^{-s})^{-1} \prod_{u \in U_b^2 }  \mathrm{Sym}_{w_u}(\{p^{-s} : p \in \mathcal{P}_u^{(b)}\}) \label{eqn:X-series-into-Sym1}  \\
&=\zeta(s)-\sum_{{\bm w} \in \mathcal{T}_t^c}  \prod_{p \in \mathcal{P}_0^{(b)}} (1-p^{-s})^{-1}  \prod_{u \in U_b^2 }  \mathrm{Sym}_{w_u}(\{p^{-s} : p \in \mathcal{P}_u^{(b)}\}) \label{eqn:X-series-into-Sym2}
\end{align}

\end{prop}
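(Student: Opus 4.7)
\medskip

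The plan is to split $\zeta(s) = \sum_m m^{-s}$ into the piece where $X(b,t,m)=1$ and the piece where $X(b,t,m)=0$, so that it suffices to identify
\[
\sum_{m:\,X(b,t,m)=0} m^{-s} \;=\; \sum_{{\bm w}\in\mathcal{T}_t^c} \prod_{p\in\mathcal{P}_0^{(b)}}(1-p^{-s})^{-1}\prod_{u\in U_b^2}\mathrm{Sym}_{w_u}\!\bigl(\{p^{-s}:p\in\mathcal{P}_u^{(b)}\}\bigr).
\]

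First I would set up a clean partition of the primes. Every prime $p$ coprime to $b$ satisfies $p^2 \bmod b \in U_b^2$, so $\{\text{primes}\} = \mathcal{P}_0^{(b)} \sqcup \bigsqcup_{u\in U_b^2}\mathcal{P}_u^{(b)}$. This induces a unique factorization $m=m_0\prod_{u}m_u$ in which $m_0$ is supported on $\mathcal{P}_0^{(b)}$ and $m_u$ on $\mathcal{P}_u^{(b)}$, and it makes $\Omega(m)=(\Omega_u(m))_{u\in U_b^2}\in W$ a well-defined invariant of $m$.

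Second, and this is the main step, I would prove the classification
\[
X(b,t,m)=1 \iff \Omega(m)\in\mathcal{T}_t.
\]
Since $t\in U_b^2$ is a unit modulo $b$, any divisor $d\mid m$ with $d^2\equiv t\Mod{b}$ must be coprime to $b$; writing $d=\prod_{u}d_u$ with $d_u\mid m_u$, the congruence $p^2\equiv u\Mod{b}$ for each $p\in\mathcal{P}_u^{(b)}$ gives
\[
d^2 \equiv \prod_{u\in U_b^2}u^{\Omega_u(d)}\Mod{b},
\]
so $d^2\equiv t$ is equivalent to $(\Omega_u(d))_u\in\mathcal{S}_t$. Conversely, for any ${\bm v}\in W$ with ${\bm v}\le \Omega(m)$ componentwise one can realise a divisor $d\mid m$ with $\Omega_u(d)=v_u$ simply by picking $v_u$ prime factors (with multiplicity) from $m_u$. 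Hence $X(b,t,m)=1$ if and only if some ${\bm v}\in\mathcal{S}_t$ satisfies ${\bm v}\le \Omega(m)$, which by definition of $\mathcal{T}_t=\mathcal{S}_t+W$ is equivalent to $\Omega(m)\in\mathcal{T}_t$. This is where I expect the most care: translating the multiplicative condition $d^2\equiv t\Mod{b}$ into the additive/lattice condition on $\Omega(m)$.

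Third, I would collect terms by $\Omega(m)$:
\[
\sum_{m:\,X(b,t,m)=0} m^{-s} \;=\; \sum_{{\bm w}\in\mathcal{T}_t^c}\;\Bigl(\sum_{m_0}m_0^{-s}\Bigr)\prod_{u\in U_b^2}\Bigl(\sum_{\substack{m_u\text{ supp.\ on }\mathcal{P}_u^{(b)}\\ \Omega_u(m_u)=w_u}}m_u^{-s}\Bigr),
\]
where the factorisation of the inner sum over $m$ with prescribed $\Omega(m)={\bm w}$ uses absolute convergence for $\Re(s)>1$ together with the multiplicative support of each factor. The $m_0$-sum is immediately $\prod_{p\in\mathcal{P}_0^{(b)}}(1-p^{-s})^{-1}$. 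For the $u$-th factor, integers $m_u$ supported on $\mathcal{P}_u^{(b)}$ with $\Omega_u(m_u)=w_u$ correspond bijectively to size-$w_u$ multisets from $\mathcal{P}_u^{(b)}$ via prime factorisation, and each such multiset $\{p_{i_1},\ldots,p_{i_{w_u}}\}$ contributes $p_{i_1}^{-s}\cdots p_{i_{w_u}}^{-s}$; this is exactly the defining expansion of $\mathrm{Sym}_{w_u}(\{p^{-s}:p\in\mathcal{P}_u^{(b)}\})$. Substituting these evaluations and subtracting from $\zeta(s)$ gives \eqref{eqn:X-series-into-Sym2}.
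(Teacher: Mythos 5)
Your proposal is correct and follows essentially the same route as the paper: you prove the key equivalence $X(b,t,m)=1 \Leftrightarrow \Omega(m)\in\mathcal{T}_t$ (with the same two directions — reading off $\Omega(d)\in\mathcal{S}_t$ from $d^2\equiv t\Mod{b}$, and realizing any $\bm{v}\le\Omega(m)$ as $\Omega(d)$ for some divisor $d\mid m$), then group the Dirichlet series by the value of $\Omega(m)$ and factor each fiber into the Euler product over $\mathcal{P}_0^{(b)}$ times the complete homogeneous symmetric polynomials. Your write-up just makes a few steps more explicit (coprimality of $d$ to $b$, the multiset interpretation of $\mathrm{Sym}_{w_u}$, absolute convergence), which is fine.
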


\begin{proof}
Define a function $\Omega : \Z_{> 0} \rightarrow W$ by
\[
\Omega(m) = \left( \sum_{p \in \mathcal{P}_u^{(b)}} \nu_p(m)
\right)_{u \in U_b^2 } \in W.
\]

We first show the following equivalent condition:
\begin{equation}\label{eqn:X-Omega-equiv}
X(b,t,m)=1 \quad \Leftrightarrow \quad \Omega(m) \in \mathcal{T}_t.
\end{equation}
If $X(b,t,m)=1$ then there exists $d \mid m$ such that $d^2 \equiv t\Mod{b}$. Then we have
\[
    \prod_{u \in U_b^2} \prod_{p \in \mathcal{P}_u^{(b)}}(p^2)^{\nu_p(d)} \equiv t \Mod{b} \quad \Rightarrow \quad \prod_{u \in  U_b^2} \prod_{p \in \mathcal{P}_u^{(b)}} u^{\nu_p(d)} \equiv t \Mod{b} \quad \Rightarrow \quad \Omega(d)  \in \mathcal{S}_t.
\]
Thus $\Omega(m) = \Omega(d) + \Omega(m/d) \in \mathcal{S}_t+W=\mathcal{T}_t$.

Conversely, suppose that $\Omega(m) \in \mathcal{T}_t$. Let ${\bm w} = (w_u)_{u\in U_b^2} \in \mathcal{S}_t$, ${\bm z} \in W$ be such that $\Omega(m) = {\bm w} + {\bm z}$. For each $u \in U_b^2$, let $p_{u,1}, \ldots, p_{u,i_u}$ be all distinct prime divisors of $m$ in $\mathcal{P}_u^{(b)}$. Then
\[
0 \le w_u \le (\Omega(m))_u = \sum_{j=1}^{i_u}\nu_{p_{u,j}}(m).
\]
Let us pick integers $f_{u,j}$ satisfying $0 \le f_{u,j} \le \nu_{p_{u,j}}(m)$ and $\sum_{j=1}^{i_u} f_{u,j}=w_u$, and define
\[
d = \prod_{u \in  U_b^2} \prod_{j=1}^{i_u}p_{u,j}^{f_{u,j}}.
\]
Then from the construction, we have $d \mid m$, $\gcd(d,b)=1$ and $\Omega(d) = \bm{w} \in \mathcal{S}_t$. Hence $d^2 \equiv t \Mod{b}$.

Now \eqref{eqn:X-Omega-equiv} implies that
\[
\sum_{m=1}^{\infty} \frac{X(b,t,m)}{m^s} = \sum_{{\bm w} \in \mathcal{T}_t} \left( \sum_{\Omega(m)={\bm w}} m^{-s} \right) = \zeta(s) - \sum_{{\bm w} \in \mathcal{T}_t^c} \left( \sum_{\Omega(m)={\bm w}} m^{-s} \right).
\]
By considering the prime factorization of $m$ with $\Omega(m)=\bm{w}$ and matching the coefficients, one gets
\[
\sum_{\Omega(m) = {\bm w}} m^{-s} = \prod_{p \in \mathcal{P}_0^{(b)}} (1-p^{-s})^{-1} \prod_{u \in U_b^2 }  \mathrm{Sym}_{w_u}(\{p^{-s} : p \in \mathcal{P}_u^{(b)}\}),
\]
which proves the proposition.
\end{proof}
We finally prove Theorem \ref{thm:MainAnswer2}. Let $L$ be a sublattice of $\mathbb{H}$ and let $B\in\n$ be such that $B\z=[\mathbb{H}:L](\mathfrak{n}L)^{-1}$. Recall that in Theorem \ref{thm:MainAnswer}, we have 
\begin{equation}\label{eqn:recall-mainthm1}
\slzeta{L}{s}=\sum_{b \mid B} (B/b)^{-s} \frac{\zeta(s-1)}{\zeta(s)} \sum_{m=1}^{\infty}  \frac{ \# \{d^2 \Mod {b} : d \mid m,\, \gcd(d,b)=1 \} }{m^s}.
\end{equation}
By \eqref{eqn:SumofX-over-all-t} and \eqref{eqn:X-series-into-Sym2}, we have
\begin{align*}
\sum_{m=1}^{\infty} &\frac{\# \{d^2 \Mod{b} : d \mid m, \gcd(d,b)=1\}}{m^s}=\sum_{t \in  U_b^2} \sum_{m=1}^{\infty} \frac{X(b,t,m)}{m^s}\\
&=\sum_{t \in U_b^2} \left( \zeta(s)-\sum_{{\bm w} \in \mathcal{T}_t^c}  \prod_{p \in \mathcal{P}_0^{(b)}} (1-p^{-s})^{-1}  \prod_{u \in U_b^2 }  \mathrm{Sym}_{w_u}(\{p^{-s} : p \in \mathcal{P}_u^{(b)}\})  \right)\\
&=\zeta(s)|U_{b}^{2}|- \sum_{{\bm w} \in W}c_{b}({\bm w})  \prod_{p \in \mathcal{P}_0^{(b)}} (1-p^{-s})^{-1}  \prod_{u \in U_b^2 }  \mathrm{Sym}_{w_u}(\{p^{-s} : p \in \mathcal{P}_u^{(b)}\}) \\
&=\zeta(s)|U_{b}^{2}| - \zeta(s) \sum_{{\bm w} \in W} c_{b}({\bm w})   \prod_{u \in U_b^2 }  \frac{\mathrm{Sym}_{w_u}(\{p^{-s} : p \in \mathcal{P}_u^{(b)}\})}{\prod_{p \in \mathcal{P}_u^{(b)}}(1-p^{-s})^{-1}}.     
\end{align*}
Plugging this into \eqref{eqn:recall-mainthm1}, we obtain the desired formula:
\[
\slzeta{L}{s} =\zeta(s-1)\sum_{b \mid B} (B/b)^{-s}\left(  \# U_b^2  - \sum_{{\bm w}\in W} c_b({\bm w}) \prod_{u \in  U_b^2  } \frac{\mathrm{Sym}_{w_u}(\{p^{-s} : p \in \mathcal{P}_u^{(b)} \} )}{\prod_{p \in \mathcal{P}_u^{(b)}}(1-p^{-s})^{-1}} \right).
\]
This proves Theorem \ref{thm:MainAnswer2}.

\section{Explicit examples}\label{sec:explicit.exm}
In this section, we demonstrate how one can explicitly compute $\slzeta{L}{s}$ for various cases. Throughout this section, let $L$ be a sublattice of $\mathbb{H}$ and let $B\in\n$ be such that $B\z=[\mathbb{H}:L](\mathfrak{n}L)^{-1}$. 

Let us first define
\begin{equation}\label{eqn:defn-Y,H}
Y_{u,w}(s) := \frac{\mathrm{Sym}_{w}(\{p^{-s} : p \in \mathcal{P}_u^{(b)}\})}{\prod_{p \in \mathcal{P}_u^{(b)} }(1-p^{-s})^{-1}} \quad \text{and} \quad
\mathcal{H}_b(s) := |U_b^2| - \prod_{{\bm w} \in W} c_b({\bm w}) \prod_{u \in  U_b^2}Y_{u,w}.
%\frac{\mathrm{Sym}_{w_u}(\{p^{-s} : p \in \mathcal{P}_u^{(b)}\})}{\prod_{p \in \mathcal{P}_u^{(b)} }(1-p^{-s})^{-1}}.
\end{equation}
Note that with these notation, Theorem \ref{thm:MainAnswer2} becomes
\begin{equation}\label{eqn:MainAnswer3}
\slzeta{L}{s} = \zeta(s-1) \sum_{b \mid B} (B/b)^{-s} \mathcal{H}_b(s).
\end{equation}
We further observe that
\begin{equation}\label{eqn:sum-all-Y=1}
\sum_{w=0}^{\infty} Y_{u,w}(s)=\sum_{w=0}^{\infty} \frac{\mathrm{Sym}_{w}( \{p^{-s} : p \in \mathcal{P}_u^{(b)}\})}{\prod_{p \in \mathcal{P}_u^{(b)}} (1-p^{-s})^{-1}} = \frac{\prod_{p \in \mathcal{P}_u^{(b)}} (1+p^{-s} + p^{-2s}+ \cdots)}{\prod_{p \in \mathcal{P}_u^{(b)}} (1-p^{-s})^{-1}} = 1.
\end{equation}

It remains to compute the integers $c_b(\bm{w})$, where an interesting combinatorial problem related to the sets $\mathcal{S}_t$ and $\mathcal{T}_t$ arise. In principle, there is a general algorithm to compute $c_b(\bm{w})$ in finite steps.

\begin{prop}
For any $\bm{w}=(w_u)\in W$, we have
\[
|U_b^2| - c_b(\bm{w}) = \#\left\{ \prod_{u \in U_b^{2}} u^{x_u} \Mod{b} : 0 \le x_u \le \min\left(w_u, |U_b^2 |\right) \right\}.
\]
\end{prop}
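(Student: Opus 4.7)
The plan is to reinterpret $|U_b^2| - c_b(\bm{w})$ as the cardinality of a certain image set of products of elements of $U_b^2$, and then to truncate the exponent range using that $U_b^2$ is a finite group. By the very definition of $c_b(\bm{w})$, we have
\[
|U_b^2| - c_b(\bm{w}) = \#\{t \in U_b^2 : \bm{w} \in \mathcal{T}_t\},
\]
so the task reduces to identifying this collection with the set appearing on the right-hand side of the claimed identity.

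First I would unfold the condition $\bm{w} \in \mathcal{T}_t = \mathcal{S}_t + W$. If $\bm{w} = \bm{s} + \bm{z}$ with $\bm{s} \in \mathcal{S}_t$ and $\bm{z} \in W$, then $\bm{x} := \bm{s}$ satisfies $0 \le x_u \le w_u$ for every $u \in U_b^2$ and $\prod_{u \in U_b^2} u^{x_u} \equiv t \Mod{b}$; conversely, any such $\bm{x}$ gives $\bm{w} = \bm{x} + (\bm{w} - \bm{x}) \in \mathcal{S}_t + W$, since $\bm{w} - \bm{x} \in W$. Hence
\[
\{t \in U_b^2 : \bm{w} \in \mathcal{T}_t\} = \Bigl\{\prod_{u \in U_b^2} u^{x_u} \Mod{b} : 0 \le x_u \le w_u\Bigr\},
\]
and each listed product in fact belongs to $U_b^2$ because $U_b^2$ is the image of the squaring homomorphism $U_b \to U_b$, hence a subgroup of $U_b$.

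It then remains to show that truncating the upper bound $w_u$ on $x_u$ to $\min(w_u, |U_b^2|)$ does not change the resulting set of products. Since $U_b^2$ is a finite abelian group of order $|U_b^2|$, Lagrange's theorem gives $u^{|U_b^2|} \equiv 1 \Mod{b}$ for every $u \in U_b^2$. Thus $u^{x_u}$ depends only on $x_u$ modulo $|U_b^2|$, and any $x_u > |U_b^2|$ may be replaced by its least non-negative residue $y_u \in \{0, 1, \ldots, |U_b^2|-1\}$ without altering $u^{x_u}$. The resulting vector $\bm{y}$ satisfies $0 \le y_u \le \min(w_u, |U_b^2|)$ and yields the same product $\prod_{u} u^{y_u} \equiv \prod_u u^{x_u} \Mod{b}$, while the reverse inclusion is immediate from $\min(w_u, |U_b^2|) \le w_u$.

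No step is truly difficult: the main bookkeeping points are verifying that $U_b^2$ is closed under multiplication (so that the products $\prod u^{x_u}$ stay inside $U_b^2$) and that the truncated exponents respect both bounds $y_u \le w_u$ and $y_u \le |U_b^2|$ simultaneously. Once these are in place, the asserted equality follows from a straightforward change of exponent representatives.
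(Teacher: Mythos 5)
Your proof is correct and follows essentially the same route as the paper's: rewrite $|U_b^2|-c_b(\bm{w})$ as $\#\{t\in U_b^2:\bm{w}\in\mathcal{T}_t\}$, unfold $\mathcal{T}_t=\mathcal{S}_t+W$ into the condition that some exponent vector $\bm{x}$ with $0\le x_u\le w_u$ has product $\equiv t\Mod{b}$, and then truncate the exponents. The only difference is that you spell out the truncation step via Lagrange's theorem ($u^{|U_b^2|}\equiv 1\Mod{b}$), which the paper simply asserts.
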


\begin{proof}
From the definition of $c_b(\bm{w})$ we have 
\[
|U_b^2| - c_b(\bm{w}) = \# \left\{ t \in U_b^2 : \bm{w} \in \mathcal{T}_t \right\}.
\]
Note that $\bm{w} \in \mathcal{T}_t$ if and only if there exists $\bm{x} \in \mathcal{S}_t$ such that $0 \le x_u \le t_u$. Since $\bm{x} \in \mathcal{S}_t$ if and only if $\prod_u u^{x_u} \equiv t \Mod{b}$, we have
\[
\bm{w} \in \mathcal{T}_t \quad \Leftrightarrow \quad t \in \left\{ \prod_{u \in U_b^{2}} u^{x_u} \Mod{b} : 0 \le x_u \le w_u \right\}.
\]
Since the set $\{ u^x \Mod{b} : 0 \le x_u \le w_u \}$ does not change if $w_u \ge |U_b^2|$, we have
\[
\left\{ \prod_{u \in U_b^{2}} u^{x_u} \Mod{b} : 0 \le x_u \le w_u \right\}=\left\{ \prod_{u \in U_b^{2}} u^{x_u} \Mod{b} : 0 \le x_u \le \min\left(w_u, |U_b^2|\right) \right\}.\qedhere
\]
\end{proof}
However, studying an effective way to compute $c_b(\bm{w})$ seems to be a challenging problem. Thus we list out simpler cases where $|U_b^{2}| \le 3$. It is well-known that for odd prime $p$ and $k \ge 1$
\[
(\Z/p^k \Z)^{\times}  \simeq \Z / p^{k-1}(p-1)\z \quad \text{and} \quad (\Z/2^k\Z)^{\times} \simeq \begin{cases} \{1\} & k \le 2, \\ \Z/2 \times \Z/2^{k-2} & k \ge 3. \end{cases}
\]
From this, one can deduce that
\begin{align*}
U_b^2 \simeq \{1\} \quad & \Leftrightarrow \quad b \in \{1,2,3,4,6,8,12,24\}, \\
U_b^2 \simeq \Z/2\Z \quad &\Leftrightarrow \quad b \in \{5,10,15,16,20,30,40,48,60,120\}, \\
U_b^2 \simeq \Z/3\Z \quad &\Leftrightarrow \quad b \in \{7, 9, 14, 18, 21, 28, 36, 42, 56, 72, 84, 168 \}.
\end{align*}

\begin{prop}\label{pro:computation.Hb}
Let $\mathcal{H}_b(s)$ be as defined in \eqref{eqn:defn-Y,H}.
\begin{enumerate}[leftmargin=*, label={\rm (\arabic*)}]
    \item If $|U_b^2|=1$, then $\mathcal{H}_b(s)=1$.
    \item If $|U_b^2|=2$, then
    \[
    \mathcal{H}_b(s) = 2 - \prod_{\substack{ \gcd(p,b)=1 \\ p^2 \not \equiv 1 \Mod{b}}} (1-p^{-s}).
    \]
    \item If $|U_b^2|=3$, then
    \[
\mathcal{H}_b(s) = 3 - 2 \prod_{\substack{\gcd(p,b)=1\\ p^2 \not \equiv 1 \Mod{b}}}(1-p^{-s} )- \left(\sum_{\substack{\gcd(p,b)=1\\p^2 \not \equiv 1 \Mod{b}} }p^{-s} \right) \prod_{\substack{\gcd(p,b)=1 \\p^2 \not \equiv 1 \Mod{b}}}(1-p^{-s} ).
\]
\end{enumerate}

\end{prop}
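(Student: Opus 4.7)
The plan is to compute $\mathcal{H}_b(s)$ directly from its definition
\[
\mathcal{H}_b(s) = |U_b^2| - \sum_{\bm{w}\in W} c_b(\bm{w}) \prod_{u \in U_b^2} Y_{u,w_u}(s),
\]
exploiting two structural observations: first, the identity $\sum_{w=0}^\infty Y_{u,w}(s) = 1$ from \eqref{eqn:sum-all-Y=1} lets us marginalize out any coordinate of $\bm w$ that does not appear in the support of $c_b$; second, since $(0,\dots,0) \in \mathcal{S}_1$ we always have $\mathcal{T}_1 = W$, so $c_b(\bm w)$ never counts the component $t=1$, and the coordinate $w_1$ will always be marginalized out.

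For case (1), $U_b^2=\{1\}$ forces $c_b(\bm w)\equiv 0$, hence $\mathcal{H}_b(s)=1$ trivially. For case (2), write $U_b^2=\{1,u\}$. Since $\mathcal{S}_u = \{\bm w : w_u \text{ odd}\}$, the smallest element has $w_u=1$, so $\mathcal{T}_u^c = \{\bm w : w_u=0\}$. Therefore $c_b(\bm w) = \mathbf{1}[w_u=0]$ and the sum collapses (after summing over $w_1$) to $Y_{u,0}(s) = \prod_{p \in \mathcal{P}_u^{(b)}}(1-p^{-s})$. I then identify $\mathcal{P}_u^{(b)}$ with $\{p : \gcd(p,b)=1,\, p^2 \not\equiv 1 \Mod b\}$, because for $p$ coprime to $b$ one has $p^2 \in U_b^2 = \{1,u\}$, so $p^2\not\equiv 1$ is equivalent to $p^2\equiv u$.

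Case (3) is the main obstacle. Writing $U_b^2 \simeq \Z/3\Z$ as $\{1,u,u^2\}$, I parametrize $\mathcal{S}_t$ by the congruence $w_u + 2w_{u^2} \equiv k \pmod 3$ (with $k=0,1,2$ for $t=1,u,u^2$). The minimal elements of $\mathcal{S}_u$ in the $(w_u,w_{u^2})$-projection are $(1,0)$ and $(0,2)$, giving $\mathcal{T}_u^c = \{w_u=0,\ w_{u^2}\in\{0,1\}\}$; symmetrically $\mathcal{T}_{u^2}^c = \{w_{u^2}=0,\ w_u\in\{0,1\}\}$. Enumerating the four pairs $(w_u,w_{u^2})\in\{0,1\}^2$ gives $c_b=2,1,1,0$. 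After marginalizing out $w_1$, the sum becomes
\[
2Y_{u,0}Y_{u^2,0} + Y_{u,0}Y_{u^2,1} + Y_{u,1}Y_{u^2,0} = Y_{u,0}Y_{u^2,0}\bigl(2 + S_u + S_{u^2}\bigr),
\]
where $S_u = \sum_{p\in\mathcal{P}_u^{(b)}} p^{-s}$ and I use $Y_{u,1}/Y_{u,0} = S_u$.

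To finish, I combine $Y_{u,0}Y_{u^2,0}$ and $S_u+S_{u^2}$ via the disjoint union $\mathcal{P}_u^{(b)} \sqcup \mathcal{P}_{u^2}^{(b)} = \{p : \gcd(p,b)=1,\ p^2\not\equiv 1 \Mod b\}$, which holds since any prime $p$ coprime to $b$ has $p^2 \in U_b^2$ and $U_b^2\setminus\{1\}=\{u,u^2\}$. This yields the stated formula. The main technical hurdle is the explicit description of $\mathcal{T}_t^c$ in case (3); once the partial-order structure on $W$ and the cyclic-group structure on $U_b^2$ are used to reduce the problem to finitely many minimal witnesses, the rest is routine algebraic simplification.
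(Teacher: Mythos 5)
Your proposal is correct and follows essentially the same route as the paper: compute $\mathcal{S}_t$ and $\mathcal{T}_t$ explicitly for the cyclic groups of order $1,2,3$, read off that $c_b(\bm w)$ is supported on the finitely many $\bm w$ with $(w_u,w_{u^2})\in\{0,1\}^2$ (with the same values $2,1,1,0$), marginalize the $t=1$ coordinate using $\sum_{w}Y_{1,w}(s)=1$, and identify $\mathcal{P}_u^{(b)}\sqcup\mathcal{P}_{u^2}^{(b)}$ with $\{p:\gcd(p,b)=1,\ p^2\not\equiv1\Mod{b}\}$ via $Y_{u,1}/Y_{u,0}=\sum_{p\in\mathcal{P}_u^{(b)}}p^{-s}$. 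Your ``minimal witness'' phrasing of $\mathcal{T}_t^c$ is just a slightly different packaging of the paper's explicit description of $\mathcal{T}_v$ and $\mathcal{T}_{v^2}$, so no substantive difference.
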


\begin{proof}
\noindent (1) Write $U_b^{2} = \{1\}$. Since $\mathcal{S}_1=\Z_{\ge 0}$, we have $\mathcal{T}_1^c = \emptyset$ and $c_b({\bm w})=0$ for all ${\bm w}$. Thus $\mathcal{H}_b(s)=1$.

(2) Write $U_b^2 = \{1,v\}$. Then we have
\[
\mathcal{S}_1 = \{ (w_1, w_v) \in \Z_{\ge 0}^2 : w_v \equiv 0 \Mod{2} \}, \quad \mathcal{S}_v = \{ (w_1, w_v) \in \Z_{\ge 0}^2 : w_v \equiv 1 \Mod{2} \}.
\]
Hence one can check that
\[
\mathcal{T}_1 = \Z_{\ge 0}^2, \quad \mathcal{T}_v = \{ (w_1, w_v) \in \Z_{\ge 0}^2 : w_v \ge 1 \}.
\]
Therefore we may observe that $c_b(w_1, w_v)=1$ if and only if $w_v=0$, and hence by \eqref{eqn:sum-all-Y=1} we have
\[
\mathcal{H}_b(s) = 2 - \sum_{w_1=0}^{\infty} Y_{1,w_1}(s) Y_{v,0}(s) = 2 - Y_{v,0}(s) = 2 - \prod_{p \in \mathcal{P}_v^{(b)}}(1-p^{-s}).
\]
Since $p \in \mathcal{P}_v^{(b)}$ if and only if $\gcd(p,b)=1$ and $p^2 \not \equiv 1 \Mod{b}$, we obtain the formula.

(3) Write $U_b^{2} = \{1, v, v^2\}$. Then for $k\in\{0,1,2\}$ we have
\[
\mathcal{S}_{v^{k}} = \{(w_1, w_{v}, w_{v^2}) \in \Z_{\ge 0}^3 : w_{v} + 2 w_{v^2} \equiv k \Mod{3} \}.
\]
One may calculate that
\[
\mathcal{T}_1 = \Z_{\ge 0}^3, \quad \mathcal{T}_{v} = \{(w_1, w_v, w_{v^2}) : w_v \ge 1 \text{ or } w_{v^2} \ge 2\}, \quad \mathcal{T}_{v^2} = \{(w_1, w_v, w_{v^2}) : w_v \ge 2 \text{ or } w_{v^2} \ge 1\}.
\]
Hence we have
\[
c_b(w_1,0,0)=2,\, c_b(w_1,1,0)= c_b(w_1,0,1)=1, \text{ and $c_b(\bm{w})=0$ for all other $\bm{w}$.}
\]
 Hence this, together with \eqref{eqn:sum-all-Y=1}, implies that 
\begin{align*}
\mathcal{H}_b(s) &= 3 -2 \sum_{w=0}^{\infty}  Y_{1,w}Y_{v,0}Y_{v^2,0} - \sum_{w=0}^{\infty} (Y_{1,w} Y_{v,0}Y_{v^2,1}+Y_{1,w}Y_{v,1}Y_{v^2,0}) \\
&=3 - 2 Y_{v,0} Y_{v^2,0} - Y_{v,0}Y_{v^2,1} - Y_{v,1}Y_{v^2,0} \\
&= 3 - 2Y_{v,0} Y_{v^2,0} - Y_{v,0}Y_{v^2,0} \left( \frac{ Y_{v,1}}{Y_{v,0}}+\frac{Y_{v^2,1}}{Y_{v^2,0}} \right).
\end{align*}
On the other hand, we have
\[
Y_{v,0}Y_{v^2,0} = \prod_{\substack{\gcd(p,b)=1\\ p^2 \not  \equiv1  \Mod{b}}} (1-p^{-s})\quad \text{and} \quad \frac{ Y_{v^k,1}}{Y_{v^k,0}} = \sum_{p \in \mathcal{P}_{v^k}^{(b)}} p^{-s}.
\]
Therefore, we conclude that
\[
\mathcal{H}_b(s) = 3 - 2 \prod_{\substack{\gcd(p,b)=1\\ p^2 \not \equiv 1 \Mod{b}}}(1-p^{-s} )- \left(\sum_{\substack{\gcd(p,b)=1\\p^2 \not \equiv 1 \Mod{b}} }p^{-s} \right) \prod_{\substack{\gcd(p,b)=1 \\p^2 \not \equiv 1 \Mod{b}}}(1-p^{-s} ). \qedhere
\]
\end{proof}

With \eqref{eqn:MainAnswer3} and Proposition \ref{pro:computation.Hb}, one can explicitly compute $\slzeta{L}{s}$ for certain sublattice $L$ of $\mathbb{H}$. Since $\slzeta{L}{s}$ is invariant of $B$, we record them according as $B$ the propositions below.
\begin{prop}
If $L$ is a sublattice of $\mathbb{H}$ with $B \in \{1,2,3,4,6,8,12,24\}$, then
\[
\slzeta{L}{s} = \zeta(s-1) \left(\sum_{b \mid B} b^{-s} \right).
\]
\end{prop}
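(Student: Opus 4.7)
The plan is to combine formula \eqref{eqn:MainAnswer3} with part (1) of \cref{pro:computation.Hb}. Recall that \eqref{eqn:MainAnswer3} gives
\[
\slzeta{L}{s} = \zeta(s-1) \sum_{b \mid B} (B/b)^{-s} \mathcal{H}_b(s),
\]
so the entire content of the proposition is to verify that $\mathcal{H}_b(s) = 1$ for every divisor $b$ of $B$ in the listed range, after which the identity follows by the substitution $b \mapsto B/b$ in the summation.

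The key observation to make is a closure property of the set $\mathcal{B} := \{1,2,3,4,6,8,12,24\}$. First I would point out that $\mathcal{B}$ is exactly the set of $b \in \mathbb{N}$ with $|U_b^2| = 1$, i.e., those $b$ for which every unit modulo $b$ squares to $1$; this is precisely the list tabulated right before \cref{pro:computation.Hb}. Then I would verify the crucial fact that $\mathcal{B}$ is closed under taking positive divisors: since $24$ is the largest element and its divisors $1,2,3,4,6,8,12,24$ exhaust $\mathcal{B}$, every divisor of any element of $\mathcal{B}$ lies again in $\mathcal{B}$.

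With that closure property in hand, I would fix $B \in \mathcal{B}$ and note that every $b \mid B$ satisfies $b \in \mathcal{B}$, hence $|U_b^2| = 1$, hence $\mathcal{H}_b(s) = 1$ by \cref{pro:computation.Hb}(1). Substituting into the displayed formula above yields
\[
\slzeta{L}{s} = \zeta(s-1) \sum_{b \mid B} (B/b)^{-s},
\]
and reindexing the sum via $b \mapsto B/b$ (a bijection on divisors of $B$) gives the claimed $\zeta(s-1) \sum_{b \mid B} b^{-s}$.

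There is no real obstacle here; the only thing to be careful about is invoking the correct hypothesis of \cref{pro:computation.Hb}(1) for \emph{every} $b \mid B$ rather than only for $b = B$, which is why the closure of $\mathcal{B}$ under divisors must be explicitly noted.
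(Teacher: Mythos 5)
Your proposal is correct and follows essentially the same route as the paper: note that $\{1,2,3,4,6,8,12,24\}$ is closed under taking divisors, apply \cref{pro:computation.Hb}(1) to get $\mathcal{H}_b(s)=1$ for every $b\mid B$, and substitute into \eqref{eqn:MainAnswer3}. Your explicit reindexing $b\mapsto B/b$ is a minor cosmetic addition that the paper leaves implicit.
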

\begin{proof}
    Note that for $B \in \{1,2,3,4,6,8,12,24\}$, its divisor $b|B$ is also in $\{1,2,3,4,6,8,12,24\}$. By Proposition \ref{pro:computation.Hb} $\mathcal{H}_b(s)=1$ for all such $b|B$, and we get
    \[
    \slzeta{L}{s} = \zeta(s-1) \sum_{b \mid B} (B/b)^{-s} \mathcal{H}_b(s)=\zeta(s-1) \sum_{b \mid B} (B/b)^{-s}. \qedhere
    \]
\end{proof}

\begin{prop}
    \begin{enumerate}[leftmargin=*, label={\rm (\arabic*)}]
        \item For $B=5$, we have
\[
\slzeta{L}{s} = \zeta(s-1) \left(2+5^{-s} - \prod_{p^2 \equiv 4 \Mod{5}}(1-p^{-s}) \right).
\]
\item For $B=10$, we have
\[
\slzeta{L}{s} = \zeta(s-1) \left( (1+2^{-s})(2+5^{-s})- (1+2^{-s}-4^{-s}) \prod_{p^2 \equiv 9 \Mod{10}}
(1-p^{-s})\right).
\]
\item For $B=15$, we have
\[
\slzeta{L}{s}= \zeta(s-1)\left( (1+3^{-s})(2+5^{-s}) - (1+3^{-s}-9^{-s}) \prod_{p^2 \equiv 4 \Mod{15}} (1-p^{-s})\right).
\]

\item For $B=20$, we have
\[
    \slzeta{L}{s} = \zeta(s-1)\left( (1+2^{-s}+4^{-s})(2+5^{-s}) -(1+2^{-s}+4^{-s}-8^{-s}) \prod_{p^2 \equiv 9 \Mod{10}}(1-p^{-s})\right).
\]
   \end{enumerate}
\end{prop}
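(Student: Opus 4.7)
The plan is to apply formula \eqref{eqn:MainAnswer3} directly, substituting the values of $\mathcal{H}_b(s)$ from \cref{pro:computation.Hb} and simplifying the resulting Dirichlet sums. For each $B \in \{5,10,15,20\}$, every divisor $b \mid B$ has $|U_b^2| \in \{1,2\}$, so by \cref{pro:computation.Hb} one has either $\mathcal{H}_b(s) = 1$ or $\mathcal{H}_b(s) = 2 - P_b(s)$, where for brevity I write
\[
P_b(s) := \prod_{\substack{\gcd(p,b)=1 \\ p^2 \not\equiv 1 \Mod{b}}} (1-p^{-s}).
\]
Among the relevant divisors, only $b \in \{5, 10, 15, 20\}$ give $|U_b^2| = 2$; every other divisor contributes $\mathcal{H}_b = 1$.

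For $B = 5$ the divisor set is $\{1,5\}$ and the stated formula follows immediately, once one notes that for $\gcd(p,5)=1$ the condition $p^2 \not\equiv 1 \Mod{5}$ is equivalent to $p^2 \equiv 4 \Mod{5}$. For the three remaining cases, the key preliminary is a set of relations among the $P_b(s)$: $P_5(s) = (1 - 2^{-s}) P_{10}(s)$, because $p=2$ is the unique prime coprime to $5$ but not to $10$ satisfying $p^2 \equiv 4 \Mod{5}$; analogously $P_5(s) = (1 - 3^{-s}) P_{15}(s)$ via $p = 3$; and $P_{20}(s) = P_{10}(s)$, since by the Chinese Remainder Theorem a prime $p$ coprime to $20$ has $p^2 \equiv 9 \Mod{20}$ iff $p^2 \equiv 9 \Mod{10}$ (the mod-$4$ constraint being automatic for odd squares). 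These relations follow from the observation that if $b \mid b'$ then the primes appearing in $P_b$ but missing from $P_{b'}$ are precisely those primes dividing $b'/b$ that already satisfy the prescribed square condition modulo $b$.

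With these identities in hand, I would substitute into \eqref{eqn:MainAnswer3} and reorganize each expansion into a polynomial part plus a product part. For instance, when $B = 10$,
\[
\sum_{b \mid 10} (10/b)^{-s} \mathcal{H}_b(s) = 10^{-s} + 5^{-s} + 2^{-s}(2 - P_5(s)) + (2 - P_{10}(s)),
\]
whose polynomial part $2 + 2 \cdot 2^{-s} + 5^{-s} + 10^{-s}$ factors as $(2 + 5^{-s})(1 + 2^{-s})$; after substituting $P_5(s) = (1 - 2^{-s}) P_{10}(s)$ the coefficient of $P_{10}(s)$ collapses to $-(1 + 2^{-s} - 4^{-s})$. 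The cases $B = 15$ and $B = 20$ follow by the same template, producing polynomial factors $(1+3^{-s})(2+5^{-s})$ and $(1+2^{-s}+4^{-s})(2+5^{-s})$, and coefficients $-(1+3^{-s}-9^{-s})$ and $-(1+2^{-s}+4^{-s}-8^{-s})$ for $P_{15}(s)$ and $P_{10}(s)$ respectively.

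The only step requiring real care is the algebraic bookkeeping for $B=20$, where three distinct divisors ($b=5,10,20$) each contribute a product term; after substituting $P_5(s)=(1-2^{-s})P_{10}(s)$ and $P_{20}(s)=P_{10}(s)$ one must check by direct expansion that the coefficient of $P_{10}(s)$ collapses to exactly $-(1+2^{-s}+4^{-s}-8^{-s})$. Beyond that, the argument is mechanical.
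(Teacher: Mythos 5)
Your proposal is correct and follows essentially the same route as the paper: apply \eqref{eqn:MainAnswer3} with $\mathcal{H}_b(s)$ from \cref{pro:computation.Hb}, note that the only divisors with $|U_b^2|=2$ are $b\in\{5,10,15,20\}$, and relate the products via exclusion of the primes dividing $b$ (e.g.\ $P_5=(1-2^{-s})P_{10}$, $P_{20}=P_{10}$). You simply carry out explicitly the factorization algebra that the paper leaves implicit, and your identities and final coefficients all check out.
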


\begin{proof}
    For $b=5$ we have
\[
\mathcal{H}_5(s) = 2 \zeta(s-1) - \zeta(s-1) \prod_{p^2 \equiv 4 \Mod{5}} (1-p^{-s}).
\]
For $b=10,15,20$, we need to exclude primes $p$ such that $p \mid b$ from the product inside, so we have
\[
\mathcal{H}_{10}(s) = \mathcal{H}_{20}(s)= 2 \zeta(s-1) - \zeta(s-1) \prod_{p^2 \equiv 9 \Mod{10}}(1-p^{-s})
\]
and
\[
\mathcal{H}_{15}(s) = 2 \zeta(s-1) - \zeta(s-1)  \prod_{p^2 \equiv 4 \Mod{15}}(1-p^{-s}). \qedhere
\]
\end{proof}
We provide several more formulae of $\slzeta{L}{s}$ for sublattices $L$ involving terms $\mathcal{H}_b(s)$ with $|U_b^2|=3$.
\begin{prop}
    \begin{enumerate}[leftmargin=*, label={\rm (\arabic*)}]
        \item For $B=7$, we have
\[
\slzeta{L}{s} = \zeta(s-1)\left((3 + 7^{-s}) - \left(2+\sum_{p^2 \equiv 2,4 \Mod{7}}p^{-s} \right) \prod_{p^2 \equiv 2,4 \Mod{7}}(1-p^{-s})\right).
\]
\item For $B=9$, we have
\[
    \slzeta{L}{s} = \zeta(s-1)\left((3+3^{-s}+9^{-s}) - \left(2+\sum_{p^2 \equiv 4,7 \Mod{9}}p^{-s} \right) \prod_{p^2 \equiv 4,7 \Mod{9}}(1-p^{-s}) \right).
\]

\item For $B=14$, we have
\begin{multline*}
    \slzeta{L}{s}=\zeta(s-1)\Bigg((1+2^{-s})(3+7^{-s}) - \prod_{p \equiv 9,11 \Mod{14}} (1-p^{-s})\\
     \times\left[2+2 \cdot 2^{-s}- 4^{-s}-8^{-s} + (1+2^{-s}-4^{-s})\sum_{p^2 \equiv 9,11 \Mod{14}} p^{-s} \right]\Bigg).
\end{multline*}

\item For $B=18$, we have
\begin{multline*}
\slzeta{L}{s} =\zeta(s-1) \Bigg((1+2^{-s})(3+3^{-s}+9^{-s}) - \prod_{p^2 \equiv 7,13 \Mod{18}}(1-p^{-s})\\
\times\left[2+2 \cdot 2^{-s} - 4^{-s} - 8^{-s} + (1+2^{-s}-4^{-s})  \sum_{p^2 \equiv 7,13 \Mod{18}}p^{-s} \right] \Bigg).
\end{multline*}
    \end{enumerate}
\end{prop}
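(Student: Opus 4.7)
The plan is to apply the master formula \eqref{eqn:MainAnswer3}, namely
\[
\slzeta{L}{s}=\zeta(s-1)\sum_{b\mid B}(B/b)^{-s}\mathcal{H}_b(s),
\]
together with the explicit evaluations of $\mathcal{H}_b(s)$ supplied by \cref{pro:computation.Hb}. For each $B\in\{7,9,14,18\}$, I would first enumerate the divisors of $B$ and classify them by $|U_b^2|$ using the lists displayed just before \cref{pro:computation.Hb}. Divisors of type $|U_b^2|=1$ contribute simply via $\mathcal{H}_b(s)=1$, while divisors of type $|U_b^2|=3$ invoke \cref{pro:computation.Hb}(3); none of these four values of $B$ admits a divisor of type $|U_b^2|=2$.

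For $B=7$ and $B=9$, the unique divisor with $|U_b^2|=3$ is $B$ itself. Extracting the constant ``$3$'' from $\mathcal{H}_B(s)$ and adding the $(B/b)^{-s}$ terms coming from the remaining divisors (each with $\mathcal{H}_b(s)=1$) yields the polynomial factors $3+7^{-s}$ and $3+3^{-s}+9^{-s}$, respectively. The subtracted part is read off directly from \cref{pro:computation.Hb}(3), giving the stated formulas at once.

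For $B=14$ and $B=18$, two divisors---$\{7,14\}$ and $\{9,18\}$ respectively---have $|U_b^2|=3$. Their constant parts combine with the $(B/b)^{-s}$ from the $|U_b^2|=1$ divisors to produce the factored polynomials $(1+2^{-s})(3+7^{-s})$ and $(1+2^{-s})(3+3^{-s}+9^{-s})$. The key observation for handling the product and sum terms is that for any odd prime $p$, $p^2\equiv 1\pmod 2$ holds automatically; hence the prime set $\{p:\gcd(p,B/2)=1,\,p^2\not\equiv 1\bmod B/2\}$ coincides with $\{p:\gcd(p,B)=1,\,p^2\not\equiv 1\bmod B\}$ except that the former also contains $p=2$ (noting $4\not\equiv 1\bmod 7$ and $\bmod 9$). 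Writing $P$ and $S$ for the product and sum attached to $\mathcal{H}_B(s)$, one obtains $(1-2^{-s})P$ and $2^{-s}+S$ for the corresponding quantities attached to $\mathcal{H}_{B/2}(s)$.

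The main obstacle is then the bookkeeping identity
\[
2^{-s}\bigl(2+2^{-s}+S\bigr)(1-2^{-s})+(2+S)=2+2\cdot 2^{-s}-4^{-s}-8^{-s}+(1+2^{-s}-4^{-s})\,S,
\]
which must be verified by matching the coefficients of $1$ and $S$ separately in the expression $2^{-s}\mathcal{H}_{B/2}(s)+\mathcal{H}_B(s)$. Once this is in hand, both formulas follow in parallel, since the algebra is identical for $B=14$ and $B=18$ and only the defining congruences for the prime sets differ.
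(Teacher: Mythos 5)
Your proposal is correct and follows exactly the route the paper intends (and uses for the neighbouring propositions): apply \eqref{eqn:MainAnswer3}, classify the divisors $b\mid B$ by $|U_b^2|$, and invoke \cref{pro:computation.Hb}, with the passage from $\mathcal{H}_{B}$ to $\mathcal{H}_{B/2}$ for $B=14,18$ handled by noting that the only difference in the prime sets is $p=2$, so that $P_{B/2}=(1-2^{-s})P_{B}$ and $S_{B/2}=2^{-s}+S_{B}$ (in your notation, with the roles of $B$ and $B/2$ as you set them up); your bookkeeping identity expands correctly to the stated bracket. No gaps.
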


\begin{rmk}
   In principle, one can consider $U_{b}^2$ and $\mathcal{H}_b(s)$ for any $b\in\N$ and use \eqref{eqn:MainAnswer3} to compute $\slzeta{L}{s}$ for any arbitrary $B\in\N$. Recall the definition of  $\mathcal{H}_b(s)$ in \eqref{eqn:defn-Y,H}. The most important part for calculating $\mathcal{H}_b(s)$ is  to compute integers $c_b({\bm w})$, which are determined by the sets $\mathcal{T}_t$ and $\mathcal{S}_t$. In general, $\mathcal{S}_t \subseteq \Z_{\ge 0}^{|U_b^{2}|}$ are positive integral points in lattice cosets, which are determined purely the by group structure of $U_b^{2}$. The problem of computing $c_b({\bm w})$ thus connects to additive combinatorial nature of the abelian group $U_b^{2}$, which we expect to be challenging future problem.
\end{rmk}

%\section*{Acknowledgments} We proclaim that all authors equally contributed to this article.

\end{document}